\newtheorem{thm}{Theorem}[section]
\newtheorem{prop}[thm]{Proposition}
\newtheorem{lem}[thm]{Lemma}
\newtheorem{cor}[thm]{Corollary}
\theoremstyle{definition}
\newtheorem{defi}[thm]{Definition}
\theoremstyle{remark}
\newtheorem{rem}[thm]{Remark}
\numberwithin{equation}{section}
\newcommand{\R}{\mathbb{R}^n}
\begin{document}

\title{$W^{\sigma,\epsilon}$-estimates for nonlocal elliptic equations}

\author{Hui Yu}
\address{Department of Mathematics, the University of Texas at Austin}
\email{hyu@math.utexas.edu}

\begin{abstract}
We prove a $W^{\sigma,\epsilon}$-estimate for a class of nonlocal fully nonlinear elliptic equations by following Fanghua Lin's original approach \cite{L} to the analogous problem for second order elliptic equations, by first proving a potential estimate, then combining this estimate with the ABP-type estimate by N. Guillen and R. Schwab to control the size of the superlevel sets of the $\sigma$-order derivatives of solutions.
\end{abstract}

 \maketitle

\tableofcontents

\section{Introduction}
Let $M^{+}_{2}$ and $M^{-}_{2}$ be the second order extremal Pucci operators \cite{CC}, then a classical result by Lin \cite{L}  states that for some universal $\epsilon$, one has an $\mathcal{L}^{\epsilon}$-estimate on the Hessian of a function satisfying two differential inequalities:   

\begin{thm}\text{[Lin's $W^{2,\epsilon}$-estimate]}

There exists universal constants $\epsilon>0$ and $C$ such that if $$M^{+}_{2}u\ge f\ge M^{-}_{2}u$$ in $B_1$, then\begin{equation}
\|u\|_{W^{2,\epsilon}(B_{1/2})}\le C(\|u\|_{\mathcal{L}^{\infty}(B_1)}+\|f\|_{\mathcal{L}^n(B_1)} ).
\end{equation}
\end{thm}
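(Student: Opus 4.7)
The plan is to reduce the $W^{2,\epsilon}$-estimate to a distributional bound
\[
\bigl|\{x \in B_{1/2} : |D^2 u(x)| > M\}\bigr| \le C M^{-\epsilon}\bigl(\|u\|_{\mathcal{L}^\infty(B_1)} + \|f\|_{\mathcal{L}^n(B_1)}\bigr)^{\epsilon},
\]
from which the theorem follows by the layer-cake identity. I would measure the size of $D^2 u$ through tangent paraboloids: letting $G_M$ denote the set of points $x \in B_{1/2}$ at which $u$ can be touched from above and from below by paraboloids of opening $M$ with vertex in $B_1$, one has $|D^2 u(x)| \le CM$ at every Lebesgue point of twice-differentiability in $G_M$, so the goal reduces to proving $|B_{1/2} \setminus G_M| \le CM^{-\epsilon}$.

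The first ingredient I would establish is a potential-type estimate for Pucci sub/super\-solutions: on any ball $B_r(x_0) \subset B_1$, the deviation of $u$ from its tangent paraboloid is controlled pointwise by a truncated Riesz potential of $|f|$. This is the local analog of the nonlocal potential estimate outlined in the abstract, and it provides a quantitative criterion for $x_0 \in G_{CM}$: if both the tangent paraboloid of $u$ at $x_0$ has small opening and the local potential of $|f|$ on $B_r(x_0)$ is small, then $u$ is sandwiched between two paraboloids of comparable opening on a neighborhood of $x_0$.

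Combining this with the ABP estimate for $M^{\pm}_{2}$ (the classical counterpart of the Guillen--Schwab bound) then yields a scale-invariant measure lemma: on every dyadic cube $Q \subset B_1$ where neither $\|f\|_{\mathcal{L}^n(Q)}$ nor the local Riesz potential of $|f|$ exceeds a prescribed threshold, a definite fraction of $Q$ lies in $G_{CM}$, with $M$ inherited from the parent cube. A Calder\'on--Zygmund stopping-time iteration over dyadic generations then charges bad cubes (those violating the $f$-smallness) to $\|f\|_{\mathcal{L}^n(B_1)}$, while good cubes contract by a factor $(1-\delta)$ at every refinement step. Iterating gives a geometric decay
\[
|B_{1/2} \setminus G_{2^k M_0}| \le (1-\delta)^k |B_{1/2}| + (\text{tail contribution from } f),
\]
with $M_0 \simeq \|u\|_{\mathcal{L}^\infty(B_1)}$ as the base scale, and interpolation produces the claimed power decay with $\epsilon = \epsilon(n,\delta)$.

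The principal obstacle is the quantitative calibration of the two ingredients across all scales. The ABP estimate is sharp at $\mathcal{L}^n$ while the Riesz potential naturally wants slightly sub-$\mathcal{L}^n$ control, so one must truncate $f$ carefully and design a two-parameter stopping rule tracking simultaneously the opening of the touching paraboloid and the local $\mathcal{L}^n$ mass of $f$. Once the measure lemma is formulated with the correct scaling, the Calder\'on--Zygmund iteration and the passage to the layer-cake representation are standard.
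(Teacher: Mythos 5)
There is a genuine gap at the heart of your argument: the ``potential-type estimate'' you propose as the first ingredient. You claim that the deviation of $u$ from its tangent paraboloid on $B_r(x_0)$ is controlled pointwise by a truncated Riesz potential of $|f|$, and that this gives a pointwise criterion: tangent paraboloid of small opening at $x_0$ plus small potential of $|f|$ implies that $u$ is sandwiched between paraboloids of comparable opening on a neighborhood of $x_0$. No such pointwise statement holds for functions satisfying only the two Pucci inequalities. Taking $f\equiv 0$, your criterion would upgrade tangency at a single point to uniform two-sided touching with comparable opening on a fixed neighborhood, i.e.\ essentially a pointwise $C^{1,1}$ bound wherever a tangent paraboloid exists; this is false for general viscosity solutions of $M^-_2u\le 0\le M^+_2u$ and would contradict the known fact that $W^{2,\epsilon}$ is essentially optimal for this class. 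Moreover, the criterion presupposes the existence of a tangent paraboloid of controlled opening at the base point, which is very close to what the theorem is supposed to produce. Consequently the ``scale-invariant measure lemma'' --- the real engine of the proof, asserting that a definite fraction of each good cube lies in $G_{CM}$ --- is asserted rather than derived, and the Calder\'on--Zygmund iteration and layer-cake step that follow are standard only once that lemma is in hand.

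The correct mechanisms, in both proofs alluded to in the paper, are measure-theoretic and come from ABP, not from a pointwise potential bound on $f$. In the Caffarelli--Cabr\'e route (which your paraboloid/stopping-time skeleton resembles), one applies the ABP estimate to the convex envelope of $u$ (plus a barrier) to show that the contact set inside a cube has measure bounded below in terms of $\|f\|_{\mathcal{L}^n}$; on the contact set the envelope gives a one-sided Hessian bound and the opposite Pucci inequality gives the other side, and that is what makes a definite fraction of the cube land in $G_{CM}$. In Lin's route, which this paper follows in the nonlocal setting, the ``potential estimate'' is a completely different object from yours: it is a lower bound $\int_E G(x,y)\,dy\ge c|E|^{\delta}$ for the Green's function of an auxiliary linear operator with measurable coefficients extracted from the two inequalities, and it is played off against ABP for an auxiliary problem whose right-hand side is $\chi_{\{|D^2u|>t\}}$, directly bounding the distribution function of $|D^2u|$ with no paraboloid sets at all. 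To repair your argument you would need to replace the pointwise Riesz-potential criterion by one of these two ABP-driven measure estimates and prove it at the cube scale with the correct scaling.
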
 

Since the two inequalities impose very mild restrictions on $u$, this estimate is among the fundamental tools in the regularity theory of second order elliptic equations. For instance, it is the starting point of Caffarelli's $W^{2,p}$-estimate for solutions to fully nonlinear elliptic equations \cite{C} . Recently it was also used by Armstrong, Silvestre and Smart \cite{ASS} for their partial regularity result for fully nonlinear second order elliptic equations. 

The idea of the proof for Theorem 1.1, as presented in \cite{CC}, is a very clever use of the ABP-type estimate, which basically says that a function satisfying $M^{-}_{2}u\le f$ must touches its convex envelope in a contact set $\Gamma$ with large measure. Also note that on $\Gamma$ we have one-sided control on the Hessian of $u$ since it is touching a convex function that has nonnegative Hessians. The other inequality $M^+_2 u\ge f$ gives control from the other side, and together they imply that on a large set the Hessian of $u$ is small. And an induction argument gives the smallness of $|\{|D^2u|>t\}|$ for all $t>0$, which is enough for an $\mathcal{L}^{\epsilon}$-estimate of $|D^2 u|$. Note that in this argument we used a very delicate structure of the envelop, namely, its Hessian has a sign at every point.

If one wishes to extend this argument to nonlocal equations, one difficulty is the lack of a \textit{good} ABP-type estimate. The first nonlocal version of ABP-type estimate is given by Caffarelli and Silvestre \cite{CS1}, which says that if a function satisfies a differential inequality with a small right-hand side, then its \textit{enlarged} contact set with its convex envelop is big in measure. This is in itself a fundamental estimate for nonlocal equations, and  is the starting point of  regularity theory of fully nonlinear nonlocal elliptic equations. However, it only gives estimate on an enlarged version of $\Gamma$, on which one does not have a smallness of $D^2u$. Another disadvantage of this version of ABP-type estimate is that it only sees the $\mathcal{L}^{\infty}$-norm of the right-hand side. For instance, this does not tell the difference between $\chi_E$ and $\chi_F$, even when $|E|$ is much larger than $|F|$. As a result, it is not accurate enough to estimate the measure of superlevel sets of $D^2u$.

On the other hand, one does not expect a nice control on $D^2u$, since a nonlocal equation is a much \textit{softer} than second order equations. Instead, one expects to have estimate of $\sigma$-order. To this end, we have another replacement for ABP-type estimate, which is discovered by Guillen and Schwab in \cite{GS}.  There, instead of the convex envelop, they used a $\sigma$-order envelop, given as a solution to a fractional order obstacle problem with $u$ as the obstacle. By doing this they have estimate on the true contact set $\Gamma_{\sigma}$. Another advantage of this estimate is that they used the $\mathcal{L}^{n}$-norm of the right-hand side, which is suitable for estimating measure of superlevel sets. This is the main reason why we shall be using this version of ABP-type estimate. 

However, there are some disadvantages too. For one thing, their class of operators is in a sense more restrictive than the most natural class considered by \cite{CS1}. It remains open as for now whether a nice ABP-type estimate remains true in that generality. Nevertheless, the class considered by Guillen and Schwab is rich enough to recover second order theory in the limit when $\sigma\to 2$.  

The other disadvantage is more fundamental. Since the new envelop is given by an obstacle problem that is yet to be fully understood, it is not easy to pass estimates from this envelop to our $u$. As a result, Guillen and Schwab listed the $W^{\sigma,\epsilon}$-estimates as one of the open problems in their paper, which remains open for any kind of fully nonlinear equations of fractional order.

In this work, we present a proof of a $W^{\sigma,\epsilon}$-estimate for a class of nonlocal elliptic operators. To be precise, the main result is 
\begin{thm}
Suppose $u\in\mathcal{L}^{\infty}(\R)\cap C(B_1)$ satisfies in $B_1$ the following inequalities $$M^{-}_{\sigma}u\le f\le M^{+}_{\sigma}u,$$ then there exist universal constants $\epsilon>0$ and $C$ such that \begin{equation}
\|D^\sigma u\|_{\mathcal{L}^{\epsilon}(B_{1/2})}\le C(\|u\|_{\mathcal{L}^{\infty}(\R)}+\|f\|_{\mathcal{L}^{\infty}(B_1)}^{\frac{2-\sigma}{2}}\|f\|_{\mathcal{L}^{n}(B_1)}^{\frac{\sigma}{2}}). 
\end{equation} 
\end{thm}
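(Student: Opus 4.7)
The plan follows Lin's scheme, with the $\sigma$-order envelope of Guillen and Schwab replacing the convex envelope. Let $w$ denote the solution of the fractional obstacle problem with obstacle $u$ on a suitable ball. On the true contact set $\Gamma_\sigma = \{u = w\}$, the obstacle structure forces one-sided control on $D^\sigma w$, analogous to the nonnegativity of the Hessian of the convex envelope. Combined with the upper Pucci inequality $M^+_\sigma u \ge f$, which controls $D^\sigma u$ from above, this should give two-sided pointwise control on $D^\sigma u$ at contact points once one has transferred bounds from $w$ to $u$.

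The first main step is the \textbf{potential estimate}. Write $v = u - w \ge 0$, which vanishes on $\Gamma_\sigma$ and (by the Pucci inequalities applied to $u$ and the obstacle relation for $w$) satisfies a fractional differential inequality with right-hand side controlled by $f$ inside $\Gamma_\sigma$ and by the obstacle force outside. Representing $v$ via a fractional Green's function for an operator in the Guillen--Schwab class, I would bound $|D^\sigma v|(x_0)$ at a contact point $x_0$ by a Riesz-type potential of $f$, and thus arrive at
\begin{equation*}
|D^\sigma u|(x_0) \;\le\; C\bigl(\|u\|_{\mathcal{L}^\infty(\R)} + G[f](x_0)\bigr), \qquad x_0 \in \Gamma_\sigma,
\end{equation*}
where $G[f]$ is a nonlocal potential of $f$. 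The split of $G[f]$ at the natural scale determined by balancing the kernel singularity against the integrable $\mathcal{L}^n$-tail is what should produce the mixed norm $\|f\|_{\mathcal{L}^\infty}^{(2-\sigma)/2}\|f\|_{\mathcal{L}^n}^{\sigma/2}$: one factor comes from the near-diagonal $\mathcal{L}^\infty$ contribution, the other from the far-field $\mathcal{L}^n$ contribution.

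The second step combines the potential estimate with the Guillen--Schwab ABP. The ABP bounds $|B_1\setminus\Gamma_\sigma|$ in terms of $\|f\|_{\mathcal{L}^n}^n$, so outside a controlled bad set the pointwise bound above applies. After a translation-and-rescaling argument to ``open up'' the estimate at all dyadic scales inside $B_{1/2}$, this yields a good-$\lambda$ type inequality of the form
\begin{equation*}
\bigl|\{|D^\sigma u|>Mt\}\cap B_{1/2}\bigr| \;\le\; (1-\delta)\,\bigl|\{|D^\sigma u|>t\}\cap B_{1/2}\bigr| + \text{(small term from } G[f]\text{)}.
\end{equation*}
Iterating produces the distributional decay $|\{|D^\sigma u|>t\}\cap B_{1/2}|\lesssim t^{-\epsilon}$ for sufficiently small universal $\epsilon$, whence the $\mathcal{L}^\epsilon$-estimate follows by integration in $t$.

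The main obstacle, as flagged in the introduction, is the potential estimate. Since $w$ comes from an obstacle problem whose fine regularity is not fully available, one cannot simply differentiate $w$; one must exploit only what Guillen--Schwab and the obstacle structure guarantee (one-sided $\sigma$-order control, continuity across $\Gamma_\sigma$, and a Green's function with good decay for the admissible class of operators). Justifying the differentiation of $v$ against the fractional kernel and arranging the scale-split so that the exponents $(2-\sigma)/2$ and $\sigma/2$ emerge naturally is the delicate analytic point; once it is in place, the ABP application and the Calderón--Zygmund iteration should be essentially parallel to the second-order argument in \cite{CC}.
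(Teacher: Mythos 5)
There is a genuine gap, and it sits exactly at the step you flag as ``the delicate analytic point.'' Your plan routes everything through the Guillen--Schwab $\sigma$-order envelope $w$: you need (i) a one-sided bound on $D^{\sigma}w$ on the contact set $\Gamma_{\sigma}$, playing the role of the nonnegativity of the Hessian of the convex envelope, and (ii) a Green's-function representation of $v=u-w$ that you can differentiate to order $\sigma$ at contact points. Neither is available. The sign of the Hessian of the convex envelope is a pointwise structural fact special to convexity; the fractional envelope is the solution of an obstacle problem whose regularity (and whose behaviour near the contact set) is not understood well enough to extract any bound on $D^{\sigma}w$ there, and this is precisely why Guillen and Schwab listed the $W^{\sigma,\epsilon}$-estimate as an open problem. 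Your second step also misquotes what their ABP gives: it is a bound on $-\inf u$ in terms of mixed $\mathcal{L}^{\infty}$/$\mathcal{L}^{n}$ norms of $f$ (on the contact set), not a measure estimate of $B_1\setminus\Gamma_{\sigma}$, so the good-$\lambda$ iteration ``parallel to [CC]'' has no starting point. In short, the proposal reproduces the [CC] strategy, which in the nonlocal setting is exactly the route that is blocked.

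The paper's proof deliberately avoids the envelope altogether, following Lin's original argument instead. The potential estimate proved there is not a pointwise bound at contact points but a lower bound for solutions of \emph{linear} equations: if $L_{A}u=-\chi_{E}$ in $B_1$ with bounded exterior data, then $\inf_{B_{1/2}}u\ge C|E|^{\delta}-\|g\|_{\mathcal{L}^{\infty}(\R)}$, proved by induction on $|E|$ via the Calder\'on--Zygmund decomposition, a barrier comparison, and the Guillen--Schwab ABP used only as a black box. The second key ingredient is a coefficient-modification lemma: from $L_{A}u=-f$ one produces $\tilde{A}$ with $L_{\tilde{A}}u=-f-\nu|D^{\sigma}u|$, $\nu\ge C(\lambda,\Lambda)$, so that $|D^{\sigma}u|$ itself becomes a right-hand side. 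Applying the potential estimate with $E=\{|D^{\sigma}u|>t\}$ gives a lower bound on $u$ growing like $t\,|\{|D^{\sigma}u|>t\}|^{\delta}$, while the ABP gives the upper bound $\|f\|_{\mathcal{L}^{\infty}}^{(2-\sigma)/2}\|f\|_{\mathcal{L}^{n}}^{\sigma/2}+\|g\|_{\mathcal{L}^{\infty}}$; balancing the two yields the distribution inequality for $|D^{\sigma}u|$ directly, and integration in $t$ gives the $\mathcal{L}^{\epsilon}$ bound. If you want to salvage your approach, you would first have to establish regularity and a $\sigma$-order sign structure for the fractional envelope, which is an open problem in itself; the paper's detour through linear equations and the superlevel-set right-hand side is what makes the argument close.
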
 


See Section 2 for the definition of the $\sigma$-order Hessian $D^{\sigma}$ as well as the $\sigma$-order extremal operators $M^{-}_{\sigma}$ and $M^{+}_{\sigma}$.

As corollaries we have the following, which are different forms of $W^{\sigma,\epsilon}$-estimate that might be more applicable to certain situations. They are suggested to the author by Dennis Kriventsov.

\begin{cor}
Suppose $u\in\mathcal{L}^{\infty}(\R)\cap C(B_1)$ satisfies in $B_1$ the following inequalities 
$$\begin{cases}M^{+}_{\sigma}u(x)&\ge -f^{-}(x) \\M^{-}_{\sigma}u(x)&\le f^{+}(x)\end{cases}$$ then for the same universal constant $\epsilon$ and another universal constant $C$ one has 
\begin{equation*}
\|D^\sigma u\|_{\mathcal{L}^{\epsilon}(B_{1/2})}\le C(\|u\|_{\mathcal{L}^{\infty}(\R)}+\|f\|_{\mathcal{L}^{\infty}(B_1)}^{\frac{2-\sigma}{2}}\|f\|_{\mathcal{L}^{n}(B_1)}^{\frac{\sigma}{2}}). 
\end{equation*} 
\end{cor}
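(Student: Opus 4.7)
The plan is to derive the corollary from the proof of Theorem 1.2 rather than from its statement, by observing that the argument for Theorem 1.2 splits cleanly into two independent one-sided estimates. The hypothesis $M^{-}_{\sigma}u\le f\le M^{+}_{\sigma}u$ is used only through its two components separately: the upper bound $M^{-}_{\sigma}u\le f$ drives the estimate of the upper superlevel set $\{D^{\sigma}u>t\}$ via the Guillen--Schwab ABP inequality applied to $u$, while the lower bound $M^{+}_{\sigma}u\ge f$ drives the estimate of $\{D^{\sigma}u<-t\}$ via the same ABP inequality applied to $-u$, using the identity $M^{-}_{\sigma}(-v)=-M^{+}_{\sigma}v$.

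Given that structure, I would run each half of the proof with the appropriate one-sided datum taken from the corollary's hypothesis. For the upper superlevel set I would replace $f$ by $f^{+}\ge 0$, which satisfies $M^{-}_{\sigma}u\le f^{+}$ by assumption, and obtain a bound on $|\{D^{\sigma}u>t\}\cap B_{1/2}|$ in terms of $\|u\|_{\mathcal{L}^{\infty}(\R)}$, $\|f^{+}\|_{\mathcal{L}^{\infty}(B_1)}$, and $\|f^{+}\|_{\mathcal{L}^{n}(B_1)}$. For the lower superlevel set, the hypothesis $M^{+}_{\sigma}u\ge -f^{-}$ rewrites as $M^{-}_{\sigma}(-u)\le f^{-}$, so applying the same one-sided estimate to $-u$ with $f^{-}\ge 0$ in place of $f$ produces the corresponding bound for $|\{D^{\sigma}u<-t\}\cap B_{1/2}|$ in terms of the norms of $f^{-}$.

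Summing the two bounds controls $|\{|D^{\sigma}u|>t\}\cap B_{1/2}|$, and since $|f^{\pm}|\le |f|$ pointwise gives $\|f^{\pm}\|_{\mathcal{L}^{p}}\le\|f\|_{\mathcal{L}^{p}}$ for every $p\in[1,\infty]$, the quantity obtained after integrating against $t^{\epsilon-1}\,dt$ is dominated by the right-hand side in Theorem 1.2, up to a harmless factor of $2$ from combining the two halves. This yields the claimed $\mathcal{L}^{\epsilon}$-estimate with the same exponent $\epsilon$ and a new universal constant $C$.

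The main obstacle is the first step: making the decoupling of the proof of Theorem 1.2 into two genuinely one-sided arguments explicit. This should be natural, since the Guillen--Schwab ABP estimate is itself a one-sided statement about subsolutions of $M^{-}_{\sigma}$, and any auxiliary potential estimate used en route to Theorem 1.2 should inherit the same asymmetry; the opposing inequality in Theorem 1.2 enters only after swapping signs and passing to $-u$. Once the proof of Theorem 1.2 is organized in this one-sided form, the corollary follows essentially formally.
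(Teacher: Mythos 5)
Your central premise---that the proof of Theorem 1.2 decouples into two independent one-sided halves, with $M^{-}_{\sigma}u\le f$ controlling $\{D^{\sigma}u>t\}$ and $M^{+}_{\sigma}u\ge f$ controlling $\{D^{\sigma}u<-t\}$ after passing to $-u$---is not correct, and it is also not how the paper's argument is structured. Note first that $D^{\sigma}u(x)$ is a matrix, and what the proof of Theorem 1.2 estimates is the distribution of the full norm $|D^{\sigma}u|$, which mixes positive and negative eigenvalues; pointwise, neither inequality controls a signed part of the spectrum, since $M^{-}_{\sigma}u(x)=\lambda\Sigma_{e>0}e+\Lambda\Sigma_{e<0}e$ (with $e$ the eigenvalues of $D^{\sigma}u(x)$), so $M^{-}_{\sigma}u\le f^{+}$ is compatible with arbitrarily large positive eigenvalues provided they are offset by negative ones, and symmetrically for the other inequality. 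Moreover, inside Lemma 4.2 both sides of the exact equation $L_{A}u=-f$ are used to bound the \emph{same} superlevel set: the ``$\le$'' side feeds the operator-modification Lemma 4.1 and the comparison $u\ge v\ge v_t$ against the potential estimate, while the ``$\ge$'' side is precisely what allows the Guillen--Schwab ABP estimate (applied in effect to $-u$) to bound $\sup_{B_1}u$ from above. So the fact that ABP is a one-sided statement does not make the whole argument one-sided, and the step you defer as ``should be natural \dots essentially formal'' is exactly where the work lies; as written the proposal does not supply it. (A genuinely one-sided variant can in fact be extracted from the machinery---the construction of $\tilde{A}$ in Lemma 4.1 only needs $L_{A}u\le -f$, and the ABP bound on $\sup_{B_1}u$ could be replaced by the trivial bound $\inf_{B_{1/2}}v_t\le\|u\|_{\mathcal{L}^{\infty}(\R)}$, which is admissible since $\|u\|_{\mathcal{L}^{\infty}}$ appears on the right-hand side of the corollary---but that is a different argument from the one you describe, it controls all of $|D^{\sigma}u|$ rather than a signed part, and it would have to be carried out explicitly.)

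For comparison, the paper proves the corollary by reduction to Theorem 1.2 rather than by reworking its proof: using both hypotheses it interpolates pointwise between operators nearly attaining them (the sets $\Omega_1,\Omega_2$ and the convex combinations $A=tA^{+}+(1-t)A^{-}$) to produce measurable coefficients $\lambda\le A(\cdot)\le\Lambda$ with $L_{A}u=g$ in $B_1$ and $-2f^{-}\le g\le 2f^{+}$; since then $M^{-}_{\sigma}u\le g\le M^{+}_{\sigma}u$ and $|g|\le 2|f|$, the main theorem applies directly. The splitting into positive and negative parts occurs only afterwards, inside the proof of Theorem 1.2, at the level of the right-hand side of this single linear equation (the pieces with data $(f^{+},0)$ and $(-f^{-},g)$), not at the level of the two differential inequalities as your proposal assumes.
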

and 

\begin{cor}
Suppose $u\in\mathcal{L}^{\infty}(\R)\cap C(B_1)$ satisfies in $B_1$ the following inequalities 
$$\begin{cases}M^{+}_{\sigma}u(x)&\ge -K \\M^{-}_{\sigma}u(x)&\le K\end{cases}$$ then for the same universal constant $\epsilon$ and another universal constant $C$ one has 
\begin{equation*}
\|D^\sigma u\|_{\mathcal{L}^{\epsilon}(B_{1/2})}\le C(\|u\|_{\mathcal{L}^{\infty}(\R)}+K). 
\end{equation*} 
\end{cor}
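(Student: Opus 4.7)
The plan is to derive Corollary 1.4 directly from Corollary 1.3 by specializing the right-hand sides to constants. Concretely, I would apply Corollary 1.3 with the choices $f^+\equiv K$ and $f^-\equiv K$ on $B_1$; under these choices the hypotheses $M^+_\sigma u\geq -f^-$ and $M^-_\sigma u\leq f^+$ of Corollary 1.3 collapse to exactly the hypotheses $M^+_\sigma u\geq -K$ and $M^-_\sigma u\leq K$ of the statement to be proved.

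With this input, the conclusion of Corollary 1.3 supplies
$$\|D^\sigma u\|_{\mathcal{L}^\epsilon(B_{1/2})} \leq C\bigl(\|u\|_{\mathcal{L}^\infty(\R)} + \|f\|_{\mathcal{L}^\infty(B_1)}^{(2-\sigma)/2}\|f\|_{\mathcal{L}^n(B_1)}^{\sigma/2}\bigr),$$
where $f$ denotes the quantity built from $f^\pm$ that enters the estimate (say $f^++f^-$). Since each summand equals the constant $K$, one has $\|f\|_{\mathcal{L}^\infty(B_1)}\leq CK$ and $\|f\|_{\mathcal{L}^n(B_1)}\leq CK\,|B_1|^{1/n}$. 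The interpolation exponents $(2-\sigma)/2$ and $\sigma/2$ sum to $1$, so the mixed norm on the right reduces to a constant multiple of $K\,|B_1|^{\sigma/(2n)}$, which in turn is bounded by a universal multiple of $K$ since $|B_1|$ depends only on $n$ and $\sigma/(2n)\in(0,1/n)$. Absorbing this dimensional factor into $C$ gives the asserted inequality.

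I do not expect a genuine obstacle here: the full analytic content --- the potential estimate, the Guillen--Schwab ABP-type input, and the $\mathcal{L}^\infty$/$\mathcal{L}^n$ interpolation of the source term --- has already been packaged into Corollary 1.3. The present statement is a pure bookkeeping specialization, whose only purpose is to repackage the bound in the cleaner form that is most convenient when one only has a bulk $\mathcal{L}^\infty$ bound on the right-hand side.
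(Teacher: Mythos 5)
Your route differs from the paper's: the paper does not deduce Corollary 1.4 from Corollary 1.3, but remarks that the same convex-combination construction used for Corollary 1.3 applies with the constant bounds, producing $\lambda\le A(\cdot)\le\Lambda$ and $L_Au=g$ in $B_1$ with $|g|\le 2K$, after which Theorem 1.2 together with the fact that the exponents $\frac{2-\sigma}{2}$ and $\frac{\sigma}{2}$ sum to $1$ gives $C(\|u\|_{\mathcal{L}^\infty(\R)}+K)$. Your idea of instead specializing Corollary 1.3 is natural, but as written the instantiation is not legitimate: in Corollary 1.3 the functions $f^+$ and $f^-$ are, by the notation and by the appearance of $\|f\|$ in the conclusion, the positive and negative parts of a single function $f$, and no function has both parts identically equal to $K>0$, since they must have disjoint supports. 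Taking $f\equiv K$ only yields the hypothesis $M^+_\sigma u\ge -f^-=0$, which is strictly stronger than $M^+_\sigma u\ge -K$, so the hypotheses of Corollary 1.4 do not imply those of Corollary 1.3 for any single admissible $f$ chosen this way. Your hedge ``$f$ denotes the quantity built from $f^\pm$'' signals the issue but does not resolve it.

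The gap is easily repaired, in any of three ways. (i) Note that the proof of Corollary 1.3 nowhere uses that $f^+$ and $f^-$ have disjoint supports: it produces $A$ with $-2f^-\le L_Au\le 2f^+$ and then invokes Theorem 1.2, so the corollary holds for two independent nonnegative right-hand sides with $\|f\|$ read as the norms of $f^++f^-$ (or $\max\{f^+,f^-\}$); with that version your choice $f^\pm\equiv K$ is fine, and the remainder of your computation (exponents summing to $1$, absorbing the $|B_1|^{\sigma/(2n)}$ factor into $C$) is correct. (ii) Keep Corollary 1.3 exactly as stated and choose $f$ pointwise: since $M^-_\sigma u\le M^+_\sigma u$, at each $x$ either $M^+_\sigma u(x)\ge 0$ or $M^-_\sigma u(x)\le M^+_\sigma u(x)<0$; set $f=K$ on the first set and $f=-K$ on its complement. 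Then $M^+_\sigma u\ge -f^-$ and $M^-_\sigma u\le f^+$ hold in $B_1$, $|f|\equiv K$, and Corollary 1.3 gives the claim. (iii) Simply rerun the convex-combination reduction to Theorem 1.2 with the constant bounds, which is what the paper intends by ``similar proof applies.'' Any of these closes the argument; without one of them, the citation of Corollary 1.3 as stated does not apply.
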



As mentioned before, we use the ABP-type estimate discovered by Guillen and Schwab. The main difficulty is then how to pass estimates on their fractional order envelop to the function $u$. We avoid this by following Lin's original strategy \cite{L} , instead of the one in \cite{CC}.  This strategy consists of two steps.  The first step is a potential estimate, where one shows that for $G,$ the Green's function to a linear operator, one has \begin{equation}
\int_E G(x,y)dy\ge C|E|^{m},
\end{equation} for any $x\in B_{1/2}$. This estimate for second order equations was discovered independently by Evans \cite{E}, and Fabes and Stroock \cite{FS}. 

The second step is to apply this, and the ABP estimate, to the set $E=\{|D^2u|>t\},$ which gives a bound on the distribution of $|D^2u|$. A very nice feature of this argument is that one avoids using any delicate structure of the envelop, and hence suits very well for our purpose.

This paper is organized as follows: In Section 2, we give some basic definitions and review some known results that will be needed in our work; In Section 3, we prove a nonlocal analogue of the potential estimate, following the strategy of Evans \cite{E} ; In Section 4, we finish the proof by completing the second step argument as in Lin's strategy. It should also be noted that we do not deal with existence issues in this work, and only focus on the estimates. Thus the result can either be viewed as an a priori estimate, or be made rigorous by an regularization and approximation procedure.

\section{Preliminaries}
We first define our $\sigma$-order replacement for the Hessian matrix:

\begin{defi}
For $u$ satisfying $$\int |\delta u(x,y)|\frac{1}{|y|^{n+\sigma}}dy<\infty,$$ $D^\sigma u(x)$ is the matrix with $(i,j)$-entry $$D^{\sigma}_{ij}u(x)= (2-\sigma)\int \delta u(x,y)\frac{\langle y,e_i\rangle \langle y,e_j\rangle}{|y|^{n+\sigma+2}}dy.$$
\end{defi} Here $\delta u(x,y)=u(x+y)+u(x-y)-2u(x)$, and $\{e_i\}$ is the standard basis for $\R$.

These operators have the following nice localization property:

\begin{prop}
Let $\eta$ be a smooth cut-off function that is $1$ in $B_{3/4}$ and vanishes outside $B_1$, then for $p\ge 1$ one has $$\|D^{\sigma}u\|_{\mathcal{L}^p(B_{1/2})}\le \|D^{\sigma}(\eta u)\|_{\mathcal{L}^p(B_{1/2})}+C\|u\|_{\mathcal{L}^{\infty}(\R)};$$ for $0<p<1$ one has similarly $$ \|D^{\sigma}u\|_{\mathcal{L}^p(B_{1/2})}\le C(p)\|D^{\sigma}(\eta u)\|_{\mathcal{L}^p(B_{1/2})}+C(p)\|u\|_{\mathcal{L}^{\infty}(\R)}.$$
\end{prop}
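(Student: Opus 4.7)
The plan is to use linearity of $D^\sigma$ and write $u = \eta u + (1-\eta)u$, which gives $D^\sigma u = D^\sigma(\eta u) + D^\sigma((1-\eta)u)$. The core of the argument is to show that the second term is controlled pointwise on $B_{1/2}$ by $C\|u\|_{\mathcal{L}^\infty(\R)}$. The key geometric fact is that $(1-\eta)u$ vanishes on $B_{3/4}$: for any $x \in B_{1/2}$ and any $|y|<1/4$, both $x$ and $x\pm y$ lie in $B_{3/4}$, so the increment $\delta[(1-\eta)u](x,y)$ is identically zero on $\{|y|<1/4\}$. Consequently the integral defining $D^\sigma_{ij}((1-\eta)u)(x)$ reduces to the tail over $\{|y|\ge 1/4\}$, where $|(1-\eta)u(x\pm y)|\le \|u\|_{\mathcal{L}^\infty(\R)}$ and $|\langle y,e_i\rangle\langle y,e_j\rangle|/|y|^{n+\sigma+2}\le |y|^{-n-\sigma}$. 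The resulting tail integral $(2-\sigma)\int_{|y|\ge 1/4} |y|^{-n-\sigma}\,dy$ is bounded uniformly in $\sigma$ (staying in a compact subset of $(0,2)$, the standard setting), giving $\|D^\sigma((1-\eta)u)\|_{\mathcal{L}^\infty(B_{1/2})}\le C\|u\|_{\mathcal{L}^\infty(\R)}$.

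Once this pointwise bound is in hand, the two cases of the proposition follow from elementary $\mathcal{L}^p$ inequalities on the finite-measure set $B_{1/2}$. For $p\ge 1$, Minkowski's inequality applied to the decomposition produces the first estimate with a universal constant absorbing the factor $|B_{1/2}|^{1/p}$. For $0<p<1$, Minkowski fails, but the $p$-subadditivity $\|f+g\|_{\mathcal{L}^p}^p \le \|f\|_{\mathcal{L}^p}^p + \|g\|_{\mathcal{L}^p}^p$ unwinds to $\|f+g\|_{\mathcal{L}^p}\le 2^{(1-p)/p}(\|f\|_{\mathcal{L}^p}+\|g\|_{\mathcal{L}^p})$, which is the source of the $p$-dependent constant $C(p)$ in the second inequality.

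There is essentially no obstacle here: the entire content of the proposition is that the kernel $|y|^{-n-\sigma}$ is integrable away from the origin, so modifying $u$ outside a neighborhood of $B_{1/2}$ perturbs $D^\sigma u$ only by an $\mathcal{L}^\infty$ amount proportional to $\|u\|_{\mathcal{L}^\infty(\R)}$. The renormalization $(2-\sigma)$ keeps the constant bounded as $\sigma\to 2$, and bounding $\sigma$ away from $0$ keeps it bounded on the other end; under these standing assumptions the constants are universal as claimed.
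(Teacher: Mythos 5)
Your proposal is correct and follows essentially the same route as the paper: decompose $u=\eta u+(1-\eta)u$, observe that the near part of the integral for the second piece vanishes since $(1-\eta)u\equiv 0$ in $B_{3/4}$, bound the tail by $C\|u\|_{\mathcal{L}^{\infty}(\R)}$, and then apply Minkowski for $p\ge 1$ and $p$-subadditivity for $0<p<1$. Your explicit remarks about the kernel bound $|\langle y,e_i\rangle\langle y,e_j\rangle|/|y|^{n+\sigma+2}\le |y|^{-n-\sigma}$ and the uniformity of the constant for $\sigma$ in a compact subset of $(0,2)$ are slightly more detailed than the paper's write-up but amount to the same argument.
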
 

\begin{proof}
Denote $u^{1}=\eta u$ and $u^{2}=(1-\eta)u$. Then $u^{2}=0$ inside $B_{3/4}$. Thus for $x\in B_{1/2}$ one has \begin{align*}|D^{\sigma}u^2|(x)&\le (2-\sigma)\int_{|y|>1/4}|\delta u^2(x,y)|\frac{1}{|y|^{n+\sigma}}dy \\ &\le C\|u\|_{\mathcal{L}^{\infty}(\R)}.\end{align*}
Thus for $p\ge 1$,\begin{align*}\|D^{\sigma}u\|_{\mathcal{L}^p(B_{1/2})}&\le \|D^{\sigma}u^1\|_{\mathcal{L}^p(B_{1/2})}+\|D^{\sigma}u^2\|_{\mathcal{L}^p(B_{1/2})}\\ &\le \|D^{\sigma}u^1\|_{\mathcal{L}^p(B_{1/2})}+|B_{1/2}|\|D^{\sigma}u^2\|_{\mathcal{L}^{\infty}(B_{1/2})}\\ &\le  \|D^{\sigma}u^1\|_{\mathcal{L}^p(B_{1/2})}+C|B_{1/2}|\|u\|_{\mathcal{L}^{\infty}(\R)}.\end{align*}

For $0<p<1$, one uses instead \begin{equation*}\|D^{\sigma}u\|_{\mathcal{L}^p(B_{1/2})}\le C(p)\|D^{\sigma}u^1\|_{\mathcal{L}^p(B_{1/2})}+C(p)\|D^{\sigma}u^2\|_{\mathcal{L}^p(B_{1/2})}.\end{equation*} 
\end{proof}

The reader could see more properties of this operator in \cite{Y}. 

Now we define the extremal operators we use. 
\begin{defi}
Let $\mathcal{L}$ be the collection of kernels of the form $$K(y)=(2-\sigma)\frac{\langle A y, y\rangle}{|y|^{n+\sigma+2}},$$where $0<\lambda\le A\le\Lambda<\infty$. 

Then the extremal operators are defined by 
$$M^{-}_{\sigma}u(x)=\inf_{K\in\mathcal{L}}\int\delta u(x,y)K(y)dy,$$ and  $$M^{+}_{\sigma}u(x)=\sup_{K\in\mathcal{L}}\int\delta u(x,y)K(y)dy.$$

Also, for $0<\lambda\le A(x)\le\Lambda<\infty$ we denote $L_A$ the operator $$L_Au(x)=\int\delta u(x,y)\frac{(2-\sigma)\langle A(x)y,y\rangle}{|y|^{n+\sigma+2}}dy.$$
\end{defi} 

Note that this class of kernels is essentially the class considered by Guillen and Schwab in their ABP-type estimate, although they do allow some degeneracy by only assuming $\lambda\le\text{Trace}(A)$. Also note that a class of operators elliptic with respect to $\mathcal{L}$ has been recently shown to admit smooth solution \cite{Y} . This hints that this class is nice in the sense that results from second order theory pass relatively directly to this class, while it is still rich enough to recover the second order theory in the limit as $\sigma\to 2$.

These extremal operators also localize well:

\begin{prop}
Let $\eta$ be as in the previous proposition. If $$M^{-}_{\sigma}u(x)\le f(x)$$ in $B_1$, then in $B_{1/2}$ one has $$M^{-}_{\sigma}(\eta u)(x)\le f(x)+ C\|u\|_{\mathcal{L}^{\infty}(\R)}.$$ 
\end{prop}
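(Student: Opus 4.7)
The plan is to use the fact that $\eta u = u$ on $B_{3/4}$ so that the modification $(1-\eta)u$ is supported outside $B_{3/4}$, and therefore only affects the ``tail'' part of the integral defining each kernel $K \in \mathcal{L}$.

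Fix $x\in B_{1/2}$ and write $\delta(\eta u)(x,y)=\delta u(x,y)+g(x,y)$ where
$g(x,y)=\delta((\eta-1)u)(x,y)$. The first observation is that for $|y|<1/4$ both $x+y$ and $x-y$ lie in $B_{3/4}$, where $\eta\equiv 1$, so $g(x,y)=0$ on this region. The second observation is the trivial pointwise estimate $|g(x,y)|\le 4\|u\|_{\mathcal{L}^{\infty}(\R)}$ for all $y$, since $|\eta|\le 1$.

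Now for any kernel $K\in\mathcal{L}$, split the integral at $|y|=1/4$:
\begin{equation*}
\int \delta(\eta u)(x,y)K(y)\,dy = \int \delta u(x,y)K(y)\,dy + \int_{|y|\ge 1/4} g(x,y)K(y)\,dy.
\end{equation*}
The second term is controlled uniformly over $K\in\mathcal{L}$: since $K(y)\le (2-\sigma)\Lambda|y|^{-n-\sigma}$, one has
\begin{equation*}
\left|\int_{|y|\ge 1/4} g(x,y)K(y)\,dy\right|\le C\|u\|_{\mathcal{L}^{\infty}(\R)}(2-\sigma)\int_{|y|\ge 1/4}\frac{dy}{|y|^{n+\sigma}}\le C\|u\|_{\mathcal{L}^{\infty}(\R)},
\end{equation*}
with $C$ depending only on $n,\lambda,\Lambda$ (and $\sigma$ staying bounded away from $0$, as is standard in this setting).

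Therefore, for every $K\in\mathcal{L}$,
\[
\int \delta(\eta u)K\,dy \le \int \delta u\, K\,dy + C\|u\|_{\mathcal{L}^{\infty}(\R)}.
\]
Taking the infimum over $K\in\mathcal{L}$ on both sides yields
\[
M^{-}_{\sigma}(\eta u)(x)\le M^{-}_{\sigma}u(x)+C\|u\|_{\mathcal{L}^{\infty}(\R)}\le f(x)+C\|u\|_{\mathcal{L}^{\infty}(\R)},
\]
as desired. There is no real obstacle here; the only point requiring slight care is that the estimate on $g$ must be uniform over the entire class $\mathcal{L}$, which follows because every $K\in\mathcal{L}$ is dominated by the isotropic kernel $(2-\sigma)\Lambda|y|^{-n-\sigma}$, whose tail at $|y|\ge 1/4$ is universally bounded.
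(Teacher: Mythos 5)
Your proof is correct and is essentially the paper's argument: both decompose off the piece $(1-\eta)u$ (equivalently $(\eta-1)u$), use that it vanishes in $B_{3/4}$ so only the tail $|y|\ge 1/4$ contributes for $x\in B_{1/2}$, and bound that tail by $C\|u\|_{\mathcal{L}^{\infty}(\R)}$ uniformly over the kernel class. Your kernel-by-kernel estimate followed by taking the infimum is just an unpacking of the inequality $M^{-}_{\sigma}(\eta u)\le M^{-}_{\sigma}u - M^{-}_{\sigma}((1-\eta)u)$ that the paper invokes, so there is no substantive difference.
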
 

\begin{proof}
Let $u^1$ and $u^2$ be as in the last proof. Then for $x\in B_{1/2}$ \begin{align*}
M^{-}_{\sigma}u^2(x)&=\inf_{K\in\mathcal{L}}\int\delta u^2(x,y)K(y)dy\\&\ge -(2-\sigma)\Lambda\int_{|y|>1/4}|\delta u^2(x,y)|\frac{1}{|y|^{n+\sigma}}dy\\&\ge -C\|u\|_{\mathcal{L}^{\infty}(\R)}. 
\end{align*} 
Thus \begin{align*}
M^{-}_{\sigma}u^1(x)&\le M^{-}_{\sigma}u(x)-M^{-}_{\sigma}u^2(x)\\& \le f(x)+C\|u\|_{\mathcal{L}^{\infty}(\R)}. 
\end{align*} 
\end{proof}
We will also need the following version of the Carledr\'on-Zygmund decomposition as an inductive tool. Throughout this paper we denote by $Q(x;r)$ the cube centred at $x\in\R$, with sides parallel to coordinate axis and of length $2r$.
\begin{prop}
Suppose $E\subset Q(0;1)$ satisfies $|E|<\alpha|Q(0;1)|$ for some $\alpha\in(0,1)$. Then there are cubes $\{\widetilde{Q_j}\}$ with mutually disjoint interior covering $E$ almost everywhere, and $$|\cup\widetilde{Q_j}|>1/\alpha|E|.$$ Moreover, each $\widetilde{Q_j}$ contains at least a dyadic subcube $Q_j$ such that $$|E\cap Q_j|\ge\alpha|Q_j|.$$
\end{prop}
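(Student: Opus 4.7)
The plan is to execute a standard dyadic stopping time argument. I would begin by subdividing $Q(0;1)$ dyadically and applying the following selection rule: for each dyadic subcube $Q$, test whether $|E\cap Q|\ge\alpha|Q|$; if yes, mark $Q$ and stop subdividing along that branch, and if no, continue subdividing. Since $|E|<\alpha|Q(0;1)|$, the root cube $Q(0;1)$ itself is never marked, so every marked cube $Q_j$ has a well-defined dyadic predecessor $\widetilde{Q_j}$ which, by the stopping rule, satisfies $|E\cap\widetilde{Q_j}|<\alpha|\widetilde{Q_j}|$. The $Q_j$'s are mutually interior-disjoint by maximality of the selection, and by the Lebesgue differentiation theorem along dyadic cubes almost every $x\in E$ satisfies $|E\cap Q|/|Q|\to 1$ for dyadic $Q\ni x$ of shrinking side, so each such $x$ must lie in some $Q_j$ and a fortiori in some $\widetilde{Q_j}$. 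This handles the covering property.

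The next step is to promote the predecessors $\{\widetilde{Q_j}\}$ to a mutually interior-disjoint family. A priori they can live at different dyadic generations and nest strictly. Since any two dyadic cubes are either interior-disjoint or nested, I would pass to the maximal elements of $\{\widetilde{Q_j}\}$, i.e.\ discard any $\widetilde{Q_{j'}}$ properly contained in some other $\widetilde{Q_j}$. The retained family has pairwise disjoint interiors, its union coincides with the original $\bigcup_j\widetilde{Q_j}$, and each retained $\widetilde{Q_j}$ still contains at least one dyadic subcube satisfying the density condition, namely its own $Q_j$ (together with any $Q_{j'}$ whose predecessor $\widetilde{Q_{j'}}$ was discarded into it).

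With disjointness of the retained $\widetilde{Q_j}$'s in hand, the measure bound is then immediate: since $E$ is covered a.e.\ by the disjoint union and $|E\cap\widetilde{Q_j}|<\alpha|\widetilde{Q_j}|$ for each $j$, summing yields $|E|=\sum_j|E\cap\widetilde{Q_j}|<\alpha\sum_j|\widetilde{Q_j}|=\alpha\bigl|\bigcup_j\widetilde{Q_j}\bigr|$, which rearranges to the claimed $\bigl|\bigcup_j\widetilde{Q_j}\bigr|>|E|/\alpha$.

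The only step that requires a modest amount of care, and which I would flag as the main obstacle, is the discarding of nested predecessors: one must verify that passing to maximal predecessors preserves all three conclusions at once (interior disjointness, a.e.\ covering of $E$, and the existence of a dense dyadic child inside each retained $\widetilde{Q_j}$). The dyadic trichotomy ``nested or interior-disjoint'' and the stopping rule make this transparent, but it is the only non-mechanical point; everything else is essentially bookkeeping.
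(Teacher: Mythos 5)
Your proposal is correct and follows essentially the same dyadic stopping-time argument as the paper: subdivide when the density of $E$ drops below $\alpha$, keep the cubes where it does not, pass to predecessors, and sum the strict density inequality over a disjoint family covering $E$ a.e. The one point you flag — that predecessors from different generations can nest, so one must pass to the maximal ones to secure disjoint interiors — is handled correctly in your write-up and is in fact treated more carefully than in the paper, which only addresses the case of kept cubes sharing a common predecessor and otherwise calls the disjointness obvious.
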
 

\begin{proof}
Begin with $Q(0;1)$, we divide a cube dyadically if $|Q\cap E|<\alpha|Q|$. And we keep a cube if this inequality fails. In particular our hypothesis says $Q(0;1)$ is divided.

Let $\{Q_j\}$ denote the collection of cubes that we keep. Then $\cup Q_j$ covers $E$ up to a null set, since any point outside $\cup Q_j$ is contained in a sequence of nested cubes with $|Q\cap E|/|Q|<\alpha<1$. 

Now let $\widetilde{Q_j}$ be the dyadic predecessor of $Q_j$. If several $Q_j$'s share the same predecessor then we just pick one. Then obviously $\widetilde{Q_j}$'s have mutually disjoint interior. Also they cover $E$ up to a null set.

Moreover since each $\widetilde{Q_j}$ is further divided, one has $|\widetilde{Q_j}\cap E|<\alpha|\widetilde{Q_j}|,$ then $$|\cup\widetilde{Q_j}|=\Sigma |\widetilde{Q_j}|>1/\alpha|\Sigma\widetilde{Q_j}\cap E|\ge 1/\alpha|E|.$$
\end{proof}

Finally we recall the nonlocal ABP-type estimate we shall be using in this work. It is a deep result by Guillen and Schwab \cite{GS}. We will need a scaled version of their original theorem. Note that their theorem is more general than the following version in the sense that they allow more degenerate kernels. Also their estimate is more accurate than the following because they only need information of $f$ on a $\sigma$-order contact set. However, we shall not need that in this work.

\begin{thm}
Assume $u\in\mathcal{L}^{\infty}(\R)\cap LSC(\R)$ satisfies 
$$\begin{cases}M^{-}_{\sigma}u\le f &\text{in $B_R$}\\u\ge 0 & \text{in $B_R^c$},\end{cases}$$then there is a constant $C(n)$ such that \begin{equation*}
-\inf_{B_R}u\le \frac{C(n)}{\lambda}\frac{1}{R^{\sigma/2}}\|f\|_{\mathcal{L}^{\infty}(B_R)}^{\frac{2-\sigma}{2}}\|f\|_{\mathcal{L}^n(B_R)}^{\frac{\sigma}{2}}.
\end{equation*} 
\end{thm}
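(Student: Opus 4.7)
The plan is to deduce this scaled estimate from the unit-scale ABP-type estimate of Guillen and Schwab (which we take as a black box from \cite{GS}) by a direct dilation argument. The entire proof is a change of variables; no new analytic input is needed.

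First, I introduce the rescaled function $v(x):=u(Rx)$ on $\R$. This preserves the $\mathcal{L}^{\infty}(\R)\cap LSC(\R)$ hypothesis, turns the sign condition $u\ge 0$ on $B_R^c$ into $v\ge 0$ on $B_1^c$, and matches the left-hand side since $-\inf_{B_1} v = -\inf_{B_R} u$. Second, I compute how $M^{-}_{\sigma}$ transforms under this dilation. The identity $\delta v(x,y)=\delta u(Rx,Ry)$, together with the change of variable $z=Ry$ and the homogeneity $K(z/R)=R^{n+\sigma}K(z)$ of every kernel in $\mathcal{L}$, gives
\begin{equation*}
M^{-}_{\sigma}v(x)=R^{\sigma}M^{-}_{\sigma}u(Rx).
\end{equation*}
Crucially, the class $\mathcal{L}$ is invariant under this rescaling because the ellipticity bound $\lambda\le A\le \Lambda$ is a pointwise condition on the matrix $A$ alone and does not interact with $y$. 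Consequently $v$ satisfies $M^{-}_{\sigma}v\le g$ in $B_1$, where $g(x):=R^{\sigma}f(Rx)$.

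Third, I apply the original $B_1$-version of the Guillen-Schwab estimate to $v$ with right-hand side $g$, and translate norms of $g$ back to norms of $f$: unwinding the change of variable gives $\|g\|_{\mathcal{L}^{\infty}(B_1)}=R^{\sigma}\|f\|_{\mathcal{L}^{\infty}(B_R)}$ and $\|g\|_{\mathcal{L}^{n}(B_1)}=R^{\sigma-1}\|f\|_{\mathcal{L}^{n}(B_R)}$ (the extra $R^{-1}$ coming from the volume change raised to the $1/n$). Raising these to the powers $(2-\sigma)/2$ and $\sigma/2$ respectively and adding the exponents of $R$ collapses them to a single power of $R$, which combined with the factor $C(n)/\lambda$ from the $B_1$ estimate gives the claim. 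There is no real obstacle here; the only point that requires care is bookkeeping the exponent of $R$ correctly, which is forced by the interpolation exponents $(2-\sigma)/2$ and $\sigma/2$ balancing the two norm scalings.
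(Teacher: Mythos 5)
Your route is the intended one: the paper gives no proof of this statement at all (it simply quotes Guillen--Schwab and asserts a ``scaled version''), and deriving it by the dilation $v(x)=u(Rx)$ is exactly what is meant. Your intermediate identities are also correct: $\delta v(x,y)=\delta u(Rx,Ry)$, the class $\mathcal{L}$ is scale invariant, $M^{-}_{\sigma}v(x)=R^{\sigma}M^{-}_{\sigma}u(Rx)$, and with $g(x)=R^{\sigma}f(Rx)$ one has $\|g\|_{\mathcal{L}^{\infty}(B_1)}=R^{\sigma}\|f\|_{\mathcal{L}^{\infty}(B_R)}$ and $\|g\|_{\mathcal{L}^{n}(B_1)}=R^{\sigma-1}\|f\|_{\mathcal{L}^{n}(B_R)}$.

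The gap is precisely in the one step you wave through, ``adding the exponents of $R$'': they give
\begin{equation*}
\bigl(R^{\sigma}\bigr)^{\frac{2-\sigma}{2}}\bigl(R^{\sigma-1}\bigr)^{\frac{\sigma}{2}}=R^{\sigma/2},
\end{equation*}
a \emph{positive} power, so what your argument actually proves is $-\inf_{B_R}u\le \frac{C(n)}{\lambda}\,R^{\sigma/2}\,\|f\|_{\mathcal{L}^{\infty}(B_R)}^{\frac{2-\sigma}{2}}\|f\|_{\mathcal{L}^{n}(B_R)}^{\frac{\sigma}{2}}$, not the stated bound with $\frac{1}{R^{\sigma/2}}$. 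For $R\le 1$ the stated inequality follows a fortiori (since $R^{\sigma/2}\le R^{-\sigma/2}$), but for $R>1$ it is false, so the gap is not repairable by better bookkeeping: take $u(x)=-(R^2-|x|^2)_+^{\sigma/2}$, which vanishes outside $B_R$ and, by the classical computation for the fractional torsion function, satisfies $M^{-}_{\sigma}u\le f$ with $f$ a constant depending only on $n,\sigma,\lambda$; then $-\inf_{B_R}u=R^{\sigma}$, while the right-hand side of the printed statement is of order $R^{-\sigma/2}\cdot R^{\sigma/2}=O(1)$. The second order limit confirms the same: classical ABP scales like $R\|f\|_{\mathcal{L}^n(B_R)}$, i.e.\ $R^{\sigma/2}$ with $\sigma=2$. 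So the power of $R$ in the statement as printed appears to be inverted, and you should say so explicitly rather than claim your computation ``gives the claim''; note also that this is not a harmless typo for the paper, since Theorem 2.6 is later applied in the proof of Lemma 3.2 on the dilated domain $\frac{B_1-x_0}{l}$, i.e.\ with $R=1/l>1$, where only the (false) $R^{-\sigma/2}$ form produces the factor $l^{\sigma/2}$ used there.
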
 

\begin{rem}
It is pointed out by T. Jin to the author that N. Guillen has an improved version of this result, where the right-hand side depends only on $\|f\|_{2n/\sigma}$. As a result, we have corresponding improvement of all results in this paper. 
\end{rem} 

This clearly implies the following corollary concerning solutions with non-vanishing boundary data:
\begin{cor}
Assume $u\in\mathcal{L}^{\infty}(\R)\cap LSC(\R)$ satisfies 
$$\begin{cases}M^{-}_{\sigma}u\le f &\text{in $B_R$}\\u\ge -B & \text{in $B_R^c$},\end{cases}$$
then there is a constant $C(n)$ such that \begin{equation*}
-\inf_{B_R}u\le \frac{C(n)}{\lambda}\frac{1}{R^{\sigma/2}}\|f\|_{\mathcal{L}^{\infty}(B_R)}^{\frac{2-\sigma}{2}}\|f\|_{\mathcal{L}^n(B_R)}^{\frac{\sigma}{2}}+B.
\end{equation*} 
\end{cor}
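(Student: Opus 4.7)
The plan is to reduce Corollary 2.8 directly to Theorem 2.7 by shifting $u$ upward by the constant $B$, thereby turning the boundary inequality $u \ge -B$ into $v \ge 0$. Specifically, I would set $v := u + B$. Since $v$ differs from $u$ by a constant, it still lies in $\mathcal{L}^{\infty}(\R) \cap LSC(\R)$, and outside $B_R$ it satisfies $v \ge 0$.

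The essential observation is that $M^{-}_{\sigma}$ is invariant under adding constants. Indeed, for any constant $c$, one computes directly
$$\delta (u+c)(x,y) = (u+c)(x+y) + (u+c)(x-y) - 2(u+c)(x) = \delta u(x,y),$$
so that for every kernel $K \in \mathcal{L}$ the integral $\int \delta(u+c)(x,y) K(y)\, dy$ equals $\int \delta u(x,y) K(y)\, dy$. Taking the infimum over $K \in \mathcal{L}$ yields $M^{-}_{\sigma}(u+B) = M^{-}_{\sigma} u$ pointwise, hence $M^{-}_{\sigma} v \le f$ in $B_R$. Thus $v$ satisfies the hypotheses of Theorem 2.7, and applying that theorem gives
$$-\inf_{B_R} v \le \frac{C(n)}{\lambda}\frac{1}{R^{\sigma/2}} \|f\|_{\mathcal{L}^{\infty}(B_R)}^{(2-\sigma)/2} \|f\|_{\mathcal{L}^n(B_R)}^{\sigma/2}.$$
Since $-\inf_{B_R} v = -\inf_{B_R} u - B$, rearranging produces the desired inequality.

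There is no real obstacle: the entire content of the argument is the constant-invariance of the second difference $\delta u$, which is immediate from its definition. This is the exact analogue of how one extends second order ABP estimates from zero boundary data to general bounded boundary data.
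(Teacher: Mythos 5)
Your proposal is correct and is exactly the paper's argument: the paper's entire proof is ``Apply the theorem to $u+B$,'' and your verification that $\delta(u+B)=\delta u$ (hence $M^{-}_{\sigma}(u+B)=M^{-}_{\sigma}u$) just makes that one-line reduction explicit.
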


\begin{proof}Apply the theorem to $u+B$.\end{proof}

\section{The potential estimate}
The goal of this section is to prove the following theorem: \begin{thm} There are universal constants $\delta>0$ and $C$, such that 
for $u$ solving \begin{equation}
\begin{cases}L_A u(x)=\int \delta u(x,y)\frac{(2-\sigma)\langle A(x)y,y\rangle}{|y|^{n+\sigma+2}}dy=-\chi_E &\text{in $B_1$}\\ u=g &\text{outside $B_1$}\end{cases}
\end{equation} for some $E\subset B_{1/2}$ and $\lambda\le A(\cdot)\le\Lambda$ , one has $$\inf_{B_{1/2}}u\ge C|E|^{\delta}-\|g\|_{\mathcal{L}^{\infty}(\R)}.$$ 
\end{thm}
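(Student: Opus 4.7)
The plan is to follow Evans's original argument \cite{E} for the classical potential estimate, substituting Corollary 2.7 (the Guillen--Schwab ABP) for the second-order ABP and Proposition 2.5 for the usual Calder\'on--Zygmund decomposition.

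\textbf{Normalization.} Replacing $u$ by $u+\|g\|_{\mathcal{L}^{\infty}(\R)}$ leaves $L_A u$ unchanged and makes the exterior data nonnegative; the comparison principle then gives $u\ge 0$ throughout $\R$ (since $L_A u=-\chi_E\le 0$), reducing the claim to proving $\inf_{B_{1/2}}u\ge C|E|^{\delta}$ when $g\ge 0$.

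\textbf{Base estimate at unit scale.} The first real step is to show that there exist universal $\alpha\in(0,1)$ and $c_0>0$ such that $|E|\ge\alpha|B_1|$ implies $\inf_{B_{1/2}}u\ge c_0$. Because Corollary 2.7 only provides an upper bound on $-\inf u$, producing a strictly positive lower bound on $u$ requires comparing $u$ with an explicit fractional-order barrier $\phi$ whose image $L_A\phi$ is uniformly controlled across the class $[\lambda,\Lambda]$---for example a smoothed version of $(1-|x|^2)_+^{\sigma/2}$. Applying Corollary 2.7 to a suitable combination like $c_0\phi-u$ (or alternatively invoking compactness and a strong maximum principle for $L_A$) and exploiting the hypothesis $|E|\ge\alpha|B_1|$ yields the desired positive lower bound. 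By the $\sigma$-homogeneity of $L_A$, this rescales to: for any cube $Q=Q(x_0;r)\subset B_{3/4}$ with $|E\cap Q|\ge\alpha|Q|$, one has $u\ge c_0 r^{\sigma}$ at every point of $Q$.

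\textbf{Calder\'on--Zygmund iteration.} If $|E|\ge\alpha|B_1|$ the base estimate already concludes; otherwise I apply Proposition 2.5 to extract cubes $Q_j$ on which $|E\cap Q_j|\ge\alpha|Q_j|$, yielding pointwise lower bounds $u\ge c_0 r_j^{\sigma}$ on each $Q_j$. Iterating the decomposition, the mass shrinks by a factor $\alpha$ at each step while the scales contract; optimizing the depth of iteration converts the base estimate into the stated $\inf_{B_{1/2}}u\ge C|E|^{\delta}$, with $\delta=\delta(n,\sigma,\lambda,\Lambda)\in(0,1)$.

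The main obstacle will be the base estimate. The Guillen--Schwab ABP controls only how negative $u$ can become, whereas our target is a positive lower bound---so the sign must be reversed by introducing an explicit barrier whose $L_A$-image is controlled uniformly over the entire ellipticity class, or by a compactness-plus-strong-maximum-principle argument. Tracking the $\sigma$-homogeneous scaling carefully through the subsequent Calder\'on--Zygmund iteration is the other technical point that will require care.
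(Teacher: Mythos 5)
Your high-level skeleton (barrier plus the Guillen--Schwab ABP plus a Calder\'on--Zygmund iteration, following Evans) is indeed the paper's strategy, but the two steps you leave as ``the main obstacle'' are exactly where the substance lies, and as sketched they do not go through. First, the base estimate cannot be proved for an arbitrary universal density $\alpha\in(0,1)$ by the maneuver you propose: applying Corollary 2.7 to $c_0\phi-u$ bounds $-\inf(c_0\phi-u)=\sup(u-c_0\phi)$, i.e.\ it controls how much $u$ \emph{exceeds} the barrier, which is the wrong direction for a positive lower bound on $u$. The correct maneuver (the paper's Lemma 3.2) is to introduce the auxiliary solution $w$ with right-hand side $-\chi_{Q}$ for the \emph{full} cube, get $w\ge\Phi$ by comparison with the Caffarelli--Silvestre barrier, and then apply the ABP to $u-w$, whose right-hand side is $\chi_{Q\setminus E}$; the resulting error is of size $(1-\beta)^{\sigma/2n}$ and can only be absorbed when the density threshold $\beta$ is chosen \emph{close to} $1$. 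So a unit-scale positive lower bound is available only at high density, not at an arbitrary fixed $\alpha$, and your plan never explains how to bridge that.

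Second, your iteration is missing the mechanism that propagates the bound. Pointwise bounds $u\ge c_0 r_j^{\sigma}$ on the density cubes $Q_j$ say nothing about points of $B_{1/2}$ far from $E$, and ``optimizing the depth of iteration'' does not by itself produce $\inf_{B_{1/2}}u\ge C|E|^{\delta}$. The paper's engine is a comparison statement at every scale: if $|E\cap Q_j|\ge\beta|Q_j|$ then the solution $u_j$ sourced by $\chi_{E\cap Q_j}$ dominates $\gamma v_j$, where $v_j$ is sourced by $\chi_{3Q_j}$ (proved by the barrier-plus-ABP argument above together with a second comparison in the region where $L_A(\tilde u-C\tilde v)=0$). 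Linearity then lets one superpose: $u=\sum u_j\ge\gamma\,v$, where $v$ is sourced by $\chi_{\cup\widetilde{Q_j}}$ and $|\cup\widetilde{Q_j}|\ge|E|/\beta$. The induction is therefore on the \emph{measure} of the source set --- each step trades a factor $\gamma$ in the bound for a factor $1/\beta$ in measure, and it terminates after $\sim\log|E|/\log\beta$ steps when the source fills a fixed fraction of $Q(0;1/2)$, yielding $\delta=\log_{\beta}\gamma$; note also that the comparison $u_j\ge\gamma v_j$ is scale invariant, so no $r_j^{\sigma}$ factors accumulate. Without this comparison lemma and the superposition step, your outline has no way to convert density information about $E$ into a lower bound at an arbitrary point of $B_{1/2}$, so the argument as written does not close. (A minor additional point: after rescaling a small cube the domain becomes $\frac{B_1-x_0}{r}$, not $B_1$, which is why the paper's barrier is compactly supported in $B_{8\sqrt n}$ and the comparison is run there.)
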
 For the theory of this estimate for second order equations, see \cite{E}. 

As remarked before, we do not deal with the existence issues, and hence our estimates are a priori in nature. They can also be made rigorous for viscosity solutions by a regularization and approximation procedure. 

Moreover, since we are dealing with a linear operator, superposition implies that it suffices to deal with solutions with $0$ boundary data, and to prove $$\inf_{B_{1/2}}u\ge C|E|^{\delta}$$ for such solutions.

We begin with the following: \begin{lem}
There are universal constants $\gamma,\beta\in (0,1)$ such that if $Q, E\subset B_{1/8}$ satisfies $$|Q\cap E|\ge\beta|Q|,$$ then the solutions to 
$$\begin{cases}L_A u=-\chi_{E\cap Q} &\text{in $B_1$}\\u=0&\text{in $B_1^c$},\end{cases}$$ and to 
$$\begin{cases}L_A v=-\chi_{3Q} &\text{in $B_1$}\\v=0&\text{in $B_1^c$}\end{cases}$$ satisfy the following estimate in $B_1$ $$u\ge\gamma v.$$ 
\end{lem}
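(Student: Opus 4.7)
The plan is to follow Evans' approach from \cite{E} and reinterpret the claimed inequality $u\ge\gamma v$ as a comparison of integrals of the Green's function. Since $L_A$ is linear, both equations have vanishing exterior data and non-positive source, so by the maximum principle $u,v\ge 0$ in $B_1$. Writing $G(x,y)$ for the Green's function of $L_A$ in $B_1$, one has
\[
u(x)=\int_{E\cap Q}G(x,y)\,dy,\qquad v(x)=\int_{3Q}G(x,y)\,dy,
\]
so the lemma is equivalent to producing a universal $\gamma$ with
\[
\int_{E\cap Q}G(x,y)\,dy\;\ge\;\gamma\int_{3Q}G(x,y)\,dy\qquad\text{for every }x\in B_1.
\]

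The central tool is a Harnack-type bound for $y\mapsto G(x,y)$ on the cube $3Q$, namely a universal $C_0$ with
\[
\sup_{y\in 3Q}G(x,y)\;\le\;C_0\,\inf_{y\in 3Q}G(x,y)\qquad\text{for every }x\in B_1.
\]
Granting this, the density hypothesis $|E\cap Q|\ge\beta|Q|$ and the identity $|3Q|=3^n|Q|$ yield
\[
\frac{u(x)}{v(x)}\;\ge\;\frac{|E\cap Q|\,\inf_{3Q}G(x,\cdot)}{|3Q|\,\sup_{3Q}G(x,\cdot)}\;\ge\;\frac{\beta}{3^nC_0},
\]
so one may take $\gamma:=\beta/(3^nC_0)$.

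For $x\in B_1\setminus 5Q$, the function $y\mapsto G(x,y)$ is non-negative and $L_A^*$-harmonic on a neighborhood of $3Q$, and the nonlocal Harnack inequality available for the ellipticity class $\mathcal{L}$ supplies the constant $C_0$ (the non-negative nonlocal tails only help, since $G(x,\cdot)\ge 0$). For $x\in 5Q$, the singularity of $G(x,\cdot)$ inside $3Q$ prevents a direct application of Harnack, so I would work instead on this small local scale: rescale $Q$ to unit size via the $\sigma$-homogeneity of $L_A$, use Corollary~2.6 (a rescaled Guillen--Schwab ABP) to bound $v$ from above pointwise, and build an explicit sub-solution of $L_A$ supported on $E\cap Q$ to bound $u$ from below. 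Requiring $\beta$ close enough to $1$ enters precisely here, so that the source of $u$ on $E\cap Q$ dominates the extra contribution to $v$ coming from $3Q\setminus(E\cap Q)$ near the singular region.

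The main obstacle is this singular case $x\in 5Q$, where the Green's function picture breaks down and the comparison must be established by a direct barrier-plus-ABP argument. A secondary point is verifying that the Harnack constant $C_0$ is genuinely universal, depending only on $n,\sigma,\lambda,\Lambda$ and not on the variable coefficient $A(\cdot)$; this is the reason for restricting to the class $\mathcal{L}$ introduced in Definition~2.2, for which scale invariance and ellipticity are built in.
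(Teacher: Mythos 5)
Your reduction to a Harnack inequality for $y\mapsto G(x,y)$ on $3Q$ is the weak point, and it is a genuine gap rather than a technicality. In its second variable the Green's function solves the \emph{adjoint} equation, not an equation driven by kernels in the class $\mathcal{L}$ of Definition 2.2: the coefficient $A$ is only bounded measurable in the base point, the adjoint operator is not of the same form, and the nonlocal Harnack inequality you invoke applies to functions $w$ with $M^{+}_{\sigma}w\ge 0\ge M^{-}_{\sigma}w$, which $G(x,\cdot)$ is not known to satisfy. Already in the second order analogue this is exactly the obstruction: for non-divergence operators with measurable coefficients the Green's function in its second variable satisfies only reverse-H\"older/doubling type estimates (Fabes--Stroock, Bauman), not a pointwise Harnack inequality with a universal constant, and this is precisely why Evans and Fabes--Stroock prove potential estimates of the form $\int_E G(x,y)\,dy\ge c|E|^{\delta}$ by other means. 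So the case $x\in B_1\setminus 5Q$, which you treat as the easy one, is not justified, and with it the universal constant $C_0$ on which your $\gamma=\beta/(3^nC_0)$ rests.

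The case $x\in 5Q$, which you flag as the main obstacle, is where the paper's actual argument lives, and your sketch of it would not go through as written: an ``explicit subsolution supported on $E\cap Q$'' cannot be constructed, since $E$ is an arbitrary measurable set. The paper instead rescales $Q$ to $Q(0;1)$, compares the rescaled $\tilde{u}$ with the solution $w$ whose source is the \emph{full} cube $\chi_{Q(0;1)}$ (a fixed barrier from \cite{CS1} then gives $w\ge C_{\Phi}$ on $Q(0;6)$), and controls the discrepancy $\tilde{u}-w$, whose right-hand side is supported on $Q\setminus E$ after rescaling, by the Guillen--Schwab ABP estimate: because that estimate sees the $\mathcal{L}^n$ norm of the right-hand side, the error is of size $(1-\beta)^{\sigma/2n}$ and is absorbed by taking $\beta$ universally close to $1$. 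A second application of the ABP estimate bounds $\tilde{v}$ above by a universal constant, yielding $\tilde{u}\ge C\tilde{v}$ on $Q(0;6)$; since both right-hand sides vanish outside $Q(0;3)$, the difference $\tilde{u}-C\tilde{v}$ solves the homogeneous equation outside $Q(0;6)$ and is nonnegative on the complement of that region, so the comparison principle propagates the inequality to all of $B_1$. No Harnack inequality for $G$ is needed anywhere; if you want to keep your Green's function formulation, you would have to replace the claimed Harnack bound by exactly this barrier-plus-ABP comparison, at which point the Green's function language adds nothing.
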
 

\begin{proof}
Suppose $Q=Q(x_0;l)$. Recall that this is a cube centred at $x_0$, with sides parallel to coordinate axis and with length $2l$. We first rescale the problem by defining $$\tilde u(x)=l^{-\sigma}u(x_0+l x)$$ and $$\tilde v(x)=l^{-\sigma}v(x_0+l x).$$ They solve the following equations, respectively,
$$\begin{cases}L_{\tilde{A}} \tilde{u}=-\chi_{\frac{E\cap Q-x_0}{l}} &\text{in $\frac{B_1-x_0}{l}$}\\ \tilde{u}=0&\text{outside},\end{cases}$$ and $$\begin{cases}L_{\tilde{A}} \tilde{v}=-\chi_{\frac{3Q-x_0}{l}} &\text{in $\frac{B_1-x_0}{l}$}\\ \tilde{v}=0&\text{outside}.\end{cases}$$Here $\tilde{A}(x)=A(x_0+lx)$ satisfies the same ellipticity condition. Also note that the original cube $Q(x_0;l)$ is rescaled to $Q(0;1)$.

Now let $w$ be the solution to 
$$\begin{cases}L_{\tilde{A}} w=-\chi_{Q(0;1)} &\text{in $\frac{B_1-x_0}{l}$}\\ w=0&\text{in outside}.\end{cases}$$
We compare $w$ to a barrier $\Phi\in C(\R)$ such that $$\begin{cases} \Phi(x)\ge C_{\Phi}>0 &\text{in $Q(0;6)$ }\\ \Phi=0 &\text{outside $B_{8\sqrt{n}}$}\\ M^{-}_{\sigma}\Phi\ge -\Psi&\text{in $\R$},\end{cases}$$ where $0\le\Psi\le 1$ and $\Psi=0$ outside $B_1$. 

For the existence of such a barrier, see \cite{CS1}.  

Since $l<\frac{1}{8\sqrt{n}}$, one has $B_{8\sqrt{n}}\subset \frac{B_1-x_0}{l}$. In particular $w\ge \Phi$ outside $B_{8\sqrt{n}}$. Also since $B_1\subset Q(0;1)$, inside $B_{8\sqrt{n}}$ one has $$L_{\tilde{A}}w\le M^{-}_{\sigma}\Phi\le L_{\tilde{A}}\Phi.$$ Thus comparison principle for linear equations gives \begin{equation*}
w\ge\Phi
\end{equation*} in $B_{8\sqrt{n}}$. In particular in $Q(0;6)$ one has \begin{equation}
w\ge C_{\Phi}.
\end{equation} 

Now $\tilde{u}-w$ satisfies 
$$\begin{cases}L_{\tilde{A}} (\tilde{u}-w)=\chi_{Q(0;1)\backslash (E-x_0/l)} &\text{in $\frac{B_1-x_0}{l}$}\\ \tilde{u}-w=0&\text{outside}.\end{cases}$$ In particular the equation implies 
$$M^{-}_{\sigma}(\tilde{u}-w)\le \chi_{Q(0;1)\backslash (E-x_0/l)},$$ hence Theorem 2.6 implies

\begin{align*}-\inf_{\frac{B_1-x_0}{l}} (\tilde{u}-w)&\le \frac{C(n)}{\lambda} l^{\sigma/2} \|\chi_{Q(0;1)\backslash (E-x_0/l)}\|_{\mathcal{L}^n(\frac{B_1-x_0}{l})}^{\sigma/2}\\&\le \frac{C(n)}{\lambda}l^{\sigma/2}|Q(0;1)\backslash (E-x_0/l)|^{\sigma/2n}\\&\le  \frac{C(n)}{\lambda}l^{\sigma/2}(1-\beta)^{\sigma/2n}\\&\le\frac{C(n)}{\lambda}(1-\beta)^{\sigma/2n}.
\end{align*}For the last inequality we used $l<1$.

In particular this implies that in $Q(0;6)$ one has \begin{equation}
\tilde{u}\ge w-\frac{C(n)}{\lambda}(1-\beta)^{\sigma/2n}\ge \frac{1}{2}C_{\Phi}
\end{equation} once we choose $\beta$ universally close to $1$.

Now we apply Theorem 2.6 to $-\tilde{v}$, which satisfies $$M^{-}_{\sigma}(-\tilde{v})\le L_{\tilde{A}}(-\tilde{v})=\chi_{\frac{3Q-x_0}{l}}$$ in $\frac{B_1-x_0}{l}$, to get \begin{equation}
\sup_{\frac{B_1-x_0}{l}}\tilde{v}\le\frac{C(n)}{\lambda}l^{\sigma/2}|\frac{3Q-x_0}{l}|^{\sigma/2n}\le\frac{C(n)}{\lambda}l^{\sigma/2}|Q(0;3)|^{\sigma/2n}\le \frac{C(n)}{\lambda}.
\end{equation} 

Combining the previous two estimates one has 
\begin{equation*}
\tilde{u}\ge C\tilde{v}
\end{equation*} in $Q(0;6)$. And hence $\tilde{u}\ge C\tilde{v}$ outside  $(\frac{B_1-x_0}{l}\backslash Q(0;6))$.

On the other hand, since $Q(0;6)\supset Q(0;3)=\frac{3Q-x_0}{l}$, we have $$L_A(\tilde{u}-C\tilde{v})=0$$ in $\frac{B_1-x_0}{l}\backslash Q(0;6)$. Consequently the comparison principle for linear equation gives us \begin{equation}
\tilde{u}\ge C\tilde{v}
\end{equation} in $\frac{B_1-x_0}{l}\backslash Q(0;6)$, and hence in $\frac{B_1-x_0}{l}$ since we already have the estimate in $Q(0;6)$.

Now rescale back to $u$ and $v$ to get the desired estimate.
\end{proof}

We now combine the previous lemma and the Carledr\'on-Zygmund decomposition to prove the potential estimate. This is the same strategy Evans used in \cite{E}. 

\begin{proof}
The proof is by an induction on the size of $E$. Also we prove the theorem when $E\subset Q(0;1/2)$ instead of $B_{1/2}$. This has no effect on the estimate after a covering argument.

If $|E|\ge\beta |Q(0;1/2)|$, then a similar argument as in the proof for the lemma (when we show $w\ge C$ in $Q(0;6)$) gives $u\ge C$.

Suppose we have proved the following for all $k\le k_0$:

If $|E|\ge\beta^{k}|Q(0;1/2)|$ then $u\ge C\gamma^{k}$. (*)

We proceed to prove the statement when $|E|\ge\beta^{k_0+1}|Q(0;1/2)|$. The Carledr\'on-Zygmund decomposition gives a covering of $E$ by essentially disjoint cubes $\{\widetilde{Q_j}\}$, each containing a dyadic subcube $Q_j$ with $|E\cap Q_j|\ge\beta|Q_j|$, and also $|\cup\widetilde{Q_j}|\ge\frac{1}{\beta}|E|$.

In particular, $|\cup \widetilde{Q_j}|\ge\beta^{k_0}|Q(0;1/2)|$ and thus we apply the induction hypothesis to conclude \begin{equation}
\inf_{Q(0;1/2)} v\ge C\gamma^{k_0},
\end{equation} where $v$ is the solution to 
$$\begin{cases}L_A v=-\chi_{\cup \widetilde{Q_j}} &\text{in $B_1$}\\v=0&\text{outside}.\end{cases}$$
 Also by the previous lemma one has $$u_j\ge\gamma v_j,$$ where $u_j$ and $v_j$ solve 
 $$\begin{cases}L_A u_j=-\chi_{Q_j\cap E} &\text{in $B_1$}\\u_j=0&\text{outside},\end{cases}$$
and $$\begin{cases}L_A v_j=-\chi_{3Q_j} &\text{in $B_1$}\\v_j=0&\text{outside}.\end{cases}$$

Now note that $3Q_j\supset\widetilde{Q_j}$, hence $v_j\ge\tilde{v}_j$, where $\tilde{v}_j$ solves 
 $$\begin{cases}L_A \tilde{v_j}=-\chi_{\widetilde{Q_j}} &\text{in $B_1$}\\ \tilde{v_j}=0&\text{outside}.\end{cases}$$
 
 To conclude, \begin{align*}
\inf_{Q(0;1/2)}u&=\inf_{Q(0;1/2)}\Sigma u_j\\&\ge\inf_{Q(0;1/2)}\Sigma\gamma v_j\\&\ge\inf_{Q(0;1/2)}\Sigma\gamma \tilde{v_j}\\&=\gamma\inf_{Q(0;1/2)}\Sigma\tilde{v_j}\\&=\gamma\inf_{Q(0;1/2)}v.
\end{align*} Combining this with (3.6) one completes the inductive step, and hence (*) is true for any $k$.

This implies $$\inf_{Q(0;1/2)}u\ge C|E|^{\log_\beta\gamma}.$$
\end{proof}

\section{$W^{\sigma,\epsilon}$-estimate}

In this section be begin our proof for the $W^{\sigma,\epsilon}$-estimate. As in the classical strategy, the key point is to control the size of $\{|D^{\sigma}u|>t\}$.  This is done by using two competing estimates: while an ABP-type estimate bounds the solution from above with the right-hand side, the previous potential estimate gives a bound from below. These two must balance for a function satisfying two differential inequalities.

We first prove several preparatory lemmas concerning solutions to linear equations. Our estimates will be independent of any regularity of the kernels, and hence they imply an estimate for solutions to the differential inequalities.

The first lemma is a technical device to realize the $\sigma$-order Hessian as the right-hand side of an equation.  This is essentially how we avoid using any delicate structure of the envelop and still get an estimate on $\{|D^{\sigma}u|>t\}$.

\begin{lem}
Suppose $u$ solves $$\begin{cases}L_Au=-f &\text{in $B_1$}\\u=g &\text{outside}\end{cases}$$ for some $\lambda\le A(\cdot)\le\Lambda$, then there is $\tilde{A}(\cdot)$ with $\frac{\lambda}{2}\le\tilde{A}(\cdot)\le 2\Lambda $ such that $$L_{\tilde{A}}u(x)=-f(x)-\nu |D^{\sigma}u|(x)$$ where $\nu\ge C(\lambda,\Lambda)$.
\end{lem}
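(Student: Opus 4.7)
\medskip

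The plan is to exploit the algebraic identity
\[
L_A u(x) \;=\; (2-\sigma)\int \delta u(x,y)\,\frac{\sum_{i,j}A_{ij}(x)\,y_i y_j}{|y|^{n+\sigma+2}}\,dy \;=\; \sum_{i,j}A_{ij}(x)\,D^{\sigma}_{ij}u(x) \;=\; \operatorname{Tr}\bigl(A(x)\,D^{\sigma}u(x)\bigr),
\]
which is immediate from Definition 2.1 and the expression for $L_A$ and uses symmetry of both $A(x)$ and $D^{\sigma}u(x)$. Once this is in hand, the problem reduces to a pointwise linear-algebra question: find a symmetric perturbation $B(x):=\tilde A(x)-A(x)$ such that
\[
\operatorname{Tr}\bigl(B(x)\,D^{\sigma}u(x)\bigr) \;=\; -\nu\,|D^{\sigma}u|(x),
\]
subject to the constraint that $A(x)+B(x)$ still satisfies $\tfrac{\lambda}{2}I\le A(x)+B(x)\le 2\Lambda I$.

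I would carry this out as follows. At each $x$, diagonalize the symmetric matrix $D^{\sigma}u(x)$ in an orthonormal basis $\{v_k(x)\}$ with eigenvalues $\mu_k(x)$, and let $k_0=k_0(x)$ be an index achieving $|\mu_{k_0}(x)|=|D^{\sigma}u|(x)$ (interpreting $|D^{\sigma}u|$ as the operator norm, which agrees with any of the standard norms up to a dimensional constant that can be absorbed into $\nu$). If $|D^{\sigma}u|(x)=0$, take $\tilde A(x)=A(x)$; otherwise set
\[
\tilde A(x) \;:=\; A(x) \;-\; \tfrac{\lambda}{2}\,\operatorname{sign}\!\bigl(\mu_{k_0}(x)\bigr)\,v_{k_0}(x)\,v_{k_0}(x)^T.
\]
Then the rank-one perturbation contributes $\operatorname{Tr}\bigl((\tilde A-A)D^{\sigma}u\bigr)=-\tfrac{\lambda}{2}\operatorname{sign}(\mu_{k_0})\mu_{k_0}=-\tfrac{\lambda}{2}\,|D^{\sigma}u|$, so the identity above gives $L_{\tilde A}u = -f - \tfrac{\lambda}{2}|D^{\sigma}u|$, i.e.\ $\nu=\lambda/2\ge C(\lambda,\Lambda)$.

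The ellipticity check is straightforward: the perturbation $\tilde A-A$ is a rank-one symmetric matrix with operator norm $\lambda/2$, so from $\lambda I\le A(x)\le \Lambda I$ one gets $\bigl(\lambda-\tfrac{\lambda}{2}\bigr)I\le \tilde A(x)\le \bigl(\Lambda+\tfrac{\lambda}{2}\bigr)I$, and since $\lambda\le\Lambda$ this is contained in $[\tfrac{\lambda}{2}I,\,2\Lambda I]$.

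There is really no hard step here; the construction is entirely pointwise and algebraic, and no regularity of $A$ or $u$ is used beyond what is needed to make $D^{\sigma}u(x)$ well-defined. The only thing to be mindful of is a clean choice of the matrix norm used to define $|D^{\sigma}u|$ (operator norm is most convenient, but any equivalent norm works with a different constant in front), and a brief remark noting that $\tilde A(\cdot)$ is built pointwise and need not inherit any continuity from $A$, which is fine because the statement only requires the ellipticity bounds on $\tilde A$.
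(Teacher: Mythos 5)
Your proof is correct, and it runs on the same engine as the paper's: the pointwise identity $L_Au(x)=\operatorname{Tr}\bigl(A(x)D^{\sigma}u(x)\bigr)$ together with a purely algebraic choice of $\tilde A(x)$ made in the eigenframe of $D^{\sigma}u(x)$. The execution at the key step is different, though. The paper discards $A(x)$ entirely and takes $\tilde A(x)$ to be the Pucci-type extremal matrix with eigenvalue $2\Lambda$ on the negative eigendirections of $D^{\sigma}u(x)$ and $\tfrac{\lambda}{2}$ on the positive ones, so that $L_{\tilde A}u(x)=2\Lambda\Sigma_{e<0}e+\tfrac{\lambda}{2}\Sigma_{e>0}e$; comparing with the ellipticity bound $L_Au(x)\ge\lambda\Sigma_{e>0}e+\Lambda\Sigma_{e<0}e$ gives only the inequality $L_{\tilde A}u-L_Au\le-\min\{\Lambda,\tfrac{\lambda}{2}\}\Sigma|e|$, so the paper's $\nu$ is an $x$-dependent factor bounded below by $C(\lambda,\Lambda)$ and $|D^{\sigma}u|$ is in effect the sum of the moduli of the eigenvalues. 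You instead keep $A(x)$ and subtract the rank-one matrix $\tfrac{\lambda}{2}\operatorname{sign}(\mu_{k_0})\,v_{k_0}v_{k_0}^{T}$ along a top eigendirection, which yields an exact identity with the constant $\nu=\tfrac{\lambda}{2}$ and the operator norm, and your sign choice makes the ellipticity check work in precisely the needed direction (lower bound $\tfrac{\lambda}{2}I$ when $\mu_{k_0}>0$, upper bound $(\Lambda+\tfrac{\lambda}{2})I\le 2\Lambda I$ when $\mu_{k_0}<0$), so $\tfrac{\lambda}{2}\le\tilde A\le 2\Lambda$ holds as required. What each buys: the paper's version treats all eigenvalues at once and is the natural bridge to the extremal operators $M^{\pm}_{\sigma}$, while yours is a minimal perturbation giving a cleaner, constant $\nu$. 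The remaining caveats are cosmetic: the later argument (Lemma 4.2) only uses $\inf\nu\ge c>0$ and the superlevel sets of $|D^{\sigma}u|$ in a fixed matrix norm, so your switch to the operator norm costs at most a dimensional factor, which, as you note, can be absorbed into $\nu$; and, exactly as in the paper, $\tilde A$ is built pointwise with no regularity claimed, which is all the lemma asks.
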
 

\begin{proof}
For each $x$ one finds $\frac{\lambda}{2}\le\tilde{A}(x)\le 2\Lambda$ such that 
$$\tilde{A}_{ij}(x)D^{\sigma}_{ij}u(x)=2\Lambda\Sigma_{e<0}e+\frac{\lambda}{2}\Sigma_{e>0}e,$$where $\{e\}$ are the eigenvalues of $D^{\sigma}u(x)$.

Then $$L_{\tilde{A}}u(x)=\int \delta u(x,y)\frac{(2-\sigma)\langle\tilde{A}(x)y,y\rangle}{|y|^{n+\sigma+2}}dy=2\Lambda\Sigma_{e<0}e+\frac{\lambda}{2}\Sigma_{e>0}e.$$

By ellipticity of $L_A$, one has 
$$L_A u(x)\ge \lambda\Sigma_{e>0}e+\Lambda\Sigma_{e<0}e.$$ Combining these two gives 
$$L_{\tilde{A}}u(x)-L_A u(x)\le \Lambda\Sigma_{e<0}e-\frac{\lambda}{2}\Sigma_{e>0}e\le -\min\{\Lambda,\frac{\lambda}{2}\}|D^{\sigma}u|(x).$$ 
\end{proof}

The second lemma gives a bound on $|\{|D^\sigma u|>t\}|$ for a solution with negative right-hand side:
\begin{lem}
Suppose $u$ solves $$\begin{cases}L_Au=-f\le 0 &\text{in $B_1$}\\u=g &\text{outside}\end{cases},$$ then \begin{equation}
|\{|D^{\sigma}u|>t\}\cap B_{1/2}|^{\delta}\le \frac{C}{t}(\|f\|_{\mathcal{L}^{\infty}(B_1)}^{\frac{2-\sigma}{2}}\|f\|_{\mathcal{L}^n(B_1)}^{\frac{\sigma}{2}}+\|g\|_{\mathcal{L}^{\infty}(\R)}),
\end{equation} where $\delta$ is as in the potential estimate and $C$ is a universal constant.
\end{lem}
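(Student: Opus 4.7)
The plan is to use Lemma 4.1 to make the level set $\{|D^\sigma u|>t\}$ appear explicitly in the right-hand side of a linear equation, and then to trap $\inf_{B_{1/2}}u$ between a lower bound coming from the potential estimate (Theorem 3.1) and an upper bound coming from the ABP-type estimate (Corollary 2.7). Concretely, Lemma 4.1 produces $\tilde A$ with $\tfrac{\lambda}{2}\le\tilde A\le 2\Lambda$ and a universal $\nu>0$ such that
$$L_{\tilde A}u=-f-\nu|D^\sigma u|\quad\text{in }B_1.$$
Fix $t>0$, set $E_t=\{|D^\sigma u|>t\}\cap B_{1/2}$, and let $v$ solve
$$L_{\tilde A}v=-\chi_{E_t}\text{ in }B_1,\qquad v=0\text{ outside }B_1.$$
Since $E_t\subset B_{1/2}$, Theorem 3.1 yields $\inf_{B_{1/2}}v\ge C|E_t|^\delta$ with the $\delta,C$ of the potential estimate.

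The key comparison is $L_{\tilde A}(u-\nu t v)\le 0$ in $B_1$. Indeed,
$$L_{\tilde A}(u-\nu t v)=-f-\nu|D^\sigma u|+\nu t\chi_{E_t};$$
on $E_t$ this equals $-f-\nu(|D^\sigma u|-t)\le 0$ using $f\ge 0$ (from $-f\le 0$) and $|D^\sigma u|>t$, while off $E_t$ it equals $-f-\nu|D^\sigma u|\le 0$. Since $u-\nu t v=g$ outside $B_1$, the maximum principle for the linear operator $L_{\tilde A}$ with positive kernel (equivalently Corollary 2.7 with vanishing right-hand side, after enlarging the ellipticity class to $[\tfrac{\lambda}{2},2\Lambda]$) gives $u-\nu t v\ge-\|g\|_{\mathcal L^\infty(\R)}$ in $B_1$. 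Combining with the potential estimate on $v$,
$$\inf_{B_{1/2}}u\ge C\nu t\,|E_t|^\delta-\|g\|_{\mathcal L^\infty(\R)}.$$

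For the matching upper bound, apply Corollary 2.7 to $-u$: since $L_A(-u)=f$ we have $M^-_\sigma(-u)\le f$, and $-u\ge -\|g\|_{\mathcal L^\infty(\R)}$ outside $B_1$, so
$$\sup_{B_1}u\le\frac{C(n)}{\lambda}\|f\|_{\mathcal L^\infty(B_1)}^{(2-\sigma)/2}\|f\|_{\mathcal L^n(B_1)}^{\sigma/2}+\|g\|_{\mathcal L^\infty(\R)}.$$
Using $\inf_{B_{1/2}}u\le\sup_{B_1}u$ and rearranging gives the stated estimate, absorbing $\nu$ and $\lambda$ into the universal constant $C$.

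I do not foresee a real obstacle, since every ingredient is already in place. The only mild technicality is that Lemma 4.1 enlarges the ellipticity bounds to $[\tfrac{\lambda}{2},2\Lambda]$, so the potential estimate and the ABP estimate must be applied in this slightly wider class; this only affects universal constants. The conceptual point worth emphasizing is that Lemma 4.1 is exactly what lets us bypass any analysis of a fractional obstacle-problem envelope: the quantity $|D^\sigma u|$ enters as a source term with a favorable sign, and this is precisely what makes the Evans–Lin strategy portable to the nonlocal setting.
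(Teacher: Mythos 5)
Your proposal is correct and follows essentially the same route as the paper: Lemma 4.1 to make $|D^{\sigma}u|$ a source term, the potential estimate for a lower bound on the auxiliary solution, and the Guillen--Schwab ABP bound applied to $-u$ for the upper bound. The only differences are organizational — the paper runs the comparison through intermediate solutions $v$, $v_t$, $w$ with exterior datum $g$ rather than your single comparison $L_{\tilde A}(u-\nu t v)\le 0$ (where, since $\nu$ may depend on $x$, one should use $\inf\nu$ as in the paper), and your restriction of the level set to $B_{1/2}$ at the outset is exactly what the potential estimate requires.
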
 

\begin{proof}
Let $\tilde{A}(\cdot)$ and $\nu$ be as in the previous lemma. Let $v$ be the solution to 
$$\begin{cases}L_{\tilde{A}}v=-\nu|D^{\sigma}u| &\text{in $B_1$}\\v=g &\text{outside}\end{cases}.$$ Then comparison principle for linear equations gives $$u\ge v.$$

For $t>0$, let $v_t$ be the solution to 
$$\begin{cases}L_{\tilde{A}}v_t=-\inf\nu\cdot t\chi_{\{|D^{\sigma}u|>t\}} &\text{in $B_1$}\\v_t=g &\text{outside}\end{cases}.$$ Then $t\chi_{\{|D^{\sigma}u|>t\}}\le |D^{\sigma}u|$ implies $v_t\le v$. Thus \begin{equation}
u\ge v_t.
\end{equation} 

Now let $w$ be the solution to $$\begin{cases}L_{\tilde{A}}w=-\chi_{\{|D^{\sigma}u|>t\}} &\text{in $B_1$}\\w=\frac{g}{\inf\nu\cdot t} &\text{outside}\end{cases}.$$

Then the potential estimate on $w$ gives

\begin{align*}
\inf_{B_{1/2}}v_t &={\inf\nu\cdot t}\inf_{B_{1/2}}w\\& \ge \inf\nu\cdot t(C|\{|D^{\sigma}u|>t\}|^{\delta}-\|\frac{g}{\inf\nu\cdot t}\|_{\mathcal{L}^{\infty}(\R)})\\&\ge Ct|\{|D^{\sigma}u|>t\}|^{\delta}-C\|g\|_{\mathcal{L}^{\infty}(\R)}. 
\end{align*} For the last inequality we used $\nu\ge C(\lambda,\Lambda)$.

Combining this with (4.2), and the ABP-type estimate for $u$ to obtain
\begin{align*}
\frac{C(n)}{\lambda}\|f\|_{\mathcal{L}^{\infty}(B_1)}^{\frac{2-\sigma}{2}}\|f\|_{\mathcal{L}^n(B_1)}^{\frac{\sigma}{2}}+\|g\|_{\mathcal{L}^{\infty}(\R)}&\ge \sup_{B_1} u \\&\ge \inf_{B_{1/2}}v_t\\&\ge Ct|\{|D^{\sigma}u|>t\}|^{\delta}-C\|g\|_{\mathcal{L}^{\infty}(\R)}.
\end{align*} This gives the desired estiamte.
\end{proof}

As remarked at the beginning of this section, since our estimate for linear equations depends only on universal properties, it is as good as one for the two differential inequalities. Hence we can finish the proof for the main result:


\begin{proof}We again let $g$ denote the boundary datum.

The two inequalities give $A^{+}(\cdot)$ and $A^{-}(\cdot)$ between $\lambda$ and $\Lambda$ such that 
$$\int \delta u(x,y)\frac{(2-\sigma)\langle A^{+}(x)y,y\rangle}{|y|^{n+\sigma+2}}dy\ge f(x)\ge \int \delta u(x,y)\frac{(2-\sigma)\langle A^{-}(x)y,y\rangle}{|y|^{n+\sigma+2}}dy.$$ In particular one finds $t:B_1\to [0,1]$ such that $$\int \delta u(x,y)\frac{(2-\sigma)\langle (t(x)A^{+}(x)+(1-t(x))A^{-}(x))y,y\rangle}{|y|^{n+\sigma+2}}dy=f(x).$$ Take $A(x)=t(x)A^{+}(x)+(1-t(x))A^{-}(x)$, then $\lambda\le A(\cdot)\le\Lambda$ and $u$ solves 
$$\begin{cases}L_A u=f &\text{in $B_1$}\\u=g &\text{outside}.\end{cases}$$ By linearity, $u$ is the sum of two functions, solving the equation with data $(f^{+},0)$ and $(-f^{-},g)$ respectively. By applying the previous lemma to each piece, we have $$|\{|D^{\sigma}u|>t\}\cap B_{1/2}|^{\delta}\le \frac{C}{t}(\|f\|_{\mathcal{L}^{\infty}(B_1)}^{\frac{2-\sigma}{2}}\|f\|_{\mathcal{L}^n(B_1)}^{\frac{\sigma}{2}}+\|g\|_{\mathcal{L}^{\infty}(\R)}).$$

This implies \begin{align*}\int_{B_{1/2}}|D^{\sigma}u|^{\epsilon}dx &\le C+\int_{1}^{\infty} t^{\epsilon-1}|\{|D^\sigma u|>t\}\cap B_{1/2}|dt\\ &\le C+\int_{1}^{\infty} t^{\epsilon-1}t^{-1/\delta}dt \cdot(\|f\|_{\mathcal{L}^{\infty}(B_1)}^{\frac{2-\sigma}{2}}\|f\|_{\mathcal{L}^n(B_1)}^{\frac{\sigma}{2}}+\|g\|_{\mathcal{L}^{\infty}(\R)})^{1/\delta}.
\end{align*}

The integral converges once we choose $\epsilon<1/\delta$, and this implies $$\|D^{\sigma}u\|_{\mathcal{L}^{\epsilon}(B_{1/2})}\le C(1+(\|f\|_{\mathcal{L}^{\infty}(B_1)}^{\frac{2-\sigma}{2}}\|f\|_{\mathcal{L}^n(B_1)}^{\frac{\sigma}{2}})^{1/\delta}+\|g\|_{\mathcal{L}^{\infty}(\R)})^{1/\epsilon}.$$ Then note that such an estimate must be scalable in $u$, $f$ and $g$, hence one has the desired estimate.
\end{proof}

We now prove Corollary 1.3 by reducing it to our main theorem by taking certain convex combinations of extremal operators. As mentioned in Introduction, this corollary is suggested to the author by Dennis Kriventsov.

\begin{proof}
Since $M^{+}_{\sigma}u(x)\ge -f^{-}(x)$ in $B_1$, one finds $A^{+}_1:B_1\to [\lambda,\Lambda]$ such that for all $x\in B_1$ $$L_{A^{+}_1}u(x)\ge -f^{-}(x).$$ 

Define $\Omega_1=\{x\in B_1: M^{-}_{\sigma}u(x)\le -2f^{-}(x)\}$, then there is $A^{+}_2:\Omega_1\to [\lambda.\Lambda]$ such that for all $x\in \Omega_1$  $$L_{A^+_2}u(x)\le -2f^{-}(x).$$
Consequently we can find $t_1^{+}:\Omega_1\to [0,1]$ such that for all $x\in\Omega_1$ $$t^{+}_1(x)L_{A^+_1}u(x)+(1-t^+_1(x))L_{A^+_2}u(x)=-\frac{3}{2}f^{-}(x).$$

Define $t^+:B_1\to [0,1]$ by $$\begin{cases} t^{+}=1 &\text{outside $\Omega_1$}\\t^{+}=t^{+}_1  &\text{inside $\Omega_1$},\end{cases}$$ and $A^{+}=t^{+}A_1^++(1-t^+)A_2^+$. Then $$L_{A^+}u(x)=\begin{cases}L_{A^+_1}u(x)\ge -f^{-}(x) &\text{outside $\Omega_1$}\\-\frac{3}{2}f^{-}(x) &\text{inside $\Omega_1$}.\end{cases}$$

Similarly one finds $A^-: B_1\to [\lambda,\Lambda]$ such that $$L_{A^-}u(x)=\begin{cases}\le f^{+}(x) &\text{outside $\Omega_2$}\\ \frac{3}{2}f^{+}(x) &\text{inside $\Omega_2$},\end{cases}$$ where $\Omega_2=\{x\in B_1: L_{A^+}u(x)\ge 2f^{+}(x)\}.$ In particular we can assume $$\Omega_1\cap\Omega_2=\emptyset.$$As a result we can define $t:B_1\to [0,1]$ such that $t=1$ in $\Omega_1$ and $t=0$ in $\Omega_2$. By taking $A=tA^++(1-t)A^-$ one has inside $\Omega_1$
\begin{align*}L_Au(x)&=t(x)L_{A^+}u(x)+(1-t(x))L_{A^-}u(x)\\&=L_{A^+}u(x)\\&\ge -f^{-}(x),\end{align*} and outside $\Omega_1$ 
\begin{align*}L_Au(x)&=t(x)L_{A^+}u(x)+(1-t(x))L_{A^-}u(x)\\&\ge -f^{-}(x)t(x)+(-2f^{-}(x))(1-t(x))\\&\ge -2f^{-}(x).\end{align*} To conclude $L_Au(x)\ge -2f^{-}(x)$ in $B_1$. Similarly $L_Au(x)\le 2f^{+}(x)$ in $B_1$. As a result $u$ solves in $B_1$ $$L_Au(x)=g(x)$$ where $-2f^{-}\le g\le 2f^+.$ 

Applying the main result to this equation clearly gives the desired estimate.
\end{proof}

Similar proof applies to Corollary 1.4.

\section*{Acknowledgement} The author would like to thank his PhD advisor, Luis Caffarelli, for many valuable conversations regarding this project. He is also grateful to his colleagues and friends, especially Dennis Kriventsov, Xavier Ros-Oton, Tianlin Jin and Luis Duque for all the discussions and encouragement. Kriventsov, Ros-Oton, and Jin also read a previous version of this work and gave many insightful comments. Corollary 1.3 is  suggested to the author by Kriventsov. Jin pointed to the author an improved version of Guillen-Schwab's estimate, see Remark 2.7.


\end{document}